\newcommand{\R}{\mathbb{R}}
\newcommand{\N}{\mathbb{N}}
\newcommand{\cA}{{\mathscr{A}}}
\newcommand{\cB}{{\mathcal{B}}}
\newcommand{\cF}{{\mathcal{F}}}
\newcommand{\cG}{{\mathcal{G}}}
\newcommand{\cM}{{\mathscr{M}}}
\newcommand{\Loneloc}{{L^1_{\text{loc}}}}
\DeclareMathOperator*{\esssup}{ess\,sup}
\DeclareMathOperator*{\essinf}{ess\,inf}
\def\BMO{\mathrm{BMO}}
\newtheorem{theorem}{Theorem}[section]
\newtheorem{lemma}[theorem]{Lemma}
\theoremstyle{definition}
\newtheorem{definition}[theorem]{Definition}
\theoremstyle{remark}
\newtheorem{remark}[theorem]{Remark}
\numberwithin{equation}{section}
\begin{document}

\title [Decreasing rearrangement and mean oscillation]
{A note on decreasing rearrangement and mean 
oscillation on measure spaces}

\author[Burchard]{Almut Burchard}
\address{(A.B.) University of Toronto, Department of Mathematics, Toronto, ON M5S 2E4, Canada}
\curraddr{}
\email{almut@math.toronto.edu}

\author[Dafni]{Galia Dafni}
\address{(G.D.) Concordia University, Department of Mathematics and Statistics, Montr\'{e}al, QC H3G 1M8, Canada}
\curraddr{}
\email{galia.dafni@concordia.ca}
\thanks{A.B. was partially supported by Natural Sciences and Engineering Research Council (NSERC) of Canada. G.D. was partially supported by the Natural Sciences and Engineering Research Council (NSERC) of Canada and the Centre de recherches math\'{e}matiques (CRM). 
R.G. was partially supported by the Centre de recherches math\'{e}matiques (CRM), the Institut des sciences math\'{e}matiques (ISM), and the Fonds de recherche du Qu\'{e}bec -- Nature et technologies (FRQNT)}

\author[Gibara]{Ryan Gibara}
\address{(R.G.) Universit\'{e} Laval, D\'{e}partement de math\'{e}matiques et de statistique, Qu\'{e}bec, QC G1V 0A6, Canada}
\curraddr{}
\email{ryan.gibara@gmail.com}

\subjclass[2010]{Primary 30L15 42B35 46E30}

\date{}

\begin{abstract} We derive bounds on
the mean oscillation of
the decreasing rearrangement $f^*$ on $\R_+$
in terms of the mean oscillation
of $f$ on a suitable 
measure space $X$. In the special case of
a doubling metric measure space, the bound depends only on 
the doubling constant.
\end{abstract}

\maketitle


\section{Introduction}


The decreasing rearrangement of a 
real-valued measurable function $f$
is the unique monotone decreasing 
function $f^*$ on the positive half-line
that is right-continuous and equimeasurable 
with $|f|$. In essence, $f^*$ is a model of $f$
where all geometric information about the level sets has been stripped away. The study of this rearrangement goes back to the work of Hardy-Littlewood \cite{hl} and Hardy-Littlewood-P\'{o}lya \cite{hlp}. 

The decreasing rearrangement is a nonlinear operator that is both isometric and non-expansive on $L^p$-spaces. Moreover, the norms in many other commonly-used function spaces, including the Lorentz spaces $L^{p,q}$ and the Orlicz spaces $L^\phi$, are invariant under equimeasurable rearrangements, making the decreasing rearrangement a valuable tool \cite{bs,cia,fv}. 

In this paper, we consider inequalities
that bound the mean oscillation of $f^*$ 
in terms of the mean oscillation of $f$.
Spaces of functions of bounded mean oscillation (BMO)
are useful as replacements for $L^\infty$
in estimates for singular integral operators and Sobolev 
embedding theorems. The BMO condition on locally integrable functions on $\R^n$,
as introduced in 1961 by John and Nirenberg \cite{jn},
is a uniform bound on their 
mean oscillation over cubes.

Our setting for the present paper is a measure space $X$. We define $\BMO$
as the set consisting
of those functions satisfying a uniform bound on their mean 
oscillation over a specified collection 
$\cA$ of measurable sets. We provide sufficient
conditions for the decreasing rearrangement $f^*$ of a 
function $f\in \BMO(X)$ to have bounded mean oscillation
on the corresponding interval $(0,\mu(X))$ when $X$ is semi-finite. 

We then restrict the setting to metric measure spaces, where
it is natural to study $\BMO$ over the collection of all balls.
For some references on $\BMO$ on a metric measure space, 
see \cite{kkms,mmno} and \cite[Chapter 3]{bb}. 
Our general result gives us the following theorem for metric measure spaces.
\begin{theorem} [Boundedness in doubling spaces]
\label{thm:bounded-doubling}
Let $X$ be a
doubling metric measure space. There exists
a constant $c_*\ge 1$, depending only on the doubling constant, 
such that if $f^*$ is the decreasing rearrangement
of a function $f\in\BMO{}(X)$, then
it is locally integrable and satisfies
\begin{equation}
\label{eq:BMO-bounded}
\|f^*\|_{\BMO} \leq c_* \|f\|_{\BMO}\,.
\end{equation}
\end{theorem}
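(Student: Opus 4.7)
My plan is to deduce Theorem~\ref{thm:bounded-doubling} directly from the general measure-space result referred to in the introduction. Accordingly, I would take the collection $\cA$ to be the family of all open balls in $X$, so that the abstract $\BMO$ norm on $X$ coincides with the standard metric $\BMO$ norm used in \cite{kkms,mmno} and \cite[Chapter~3]{bb}, and then check that, in the presence of doubling, this choice satisfies the sufficient conditions of the general theorem with constants controlled solely by the doubling constant.

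The verification should rest on the standard geometric machinery available in any doubling metric measure space: the Vitali and Besicovitch covering lemmas, Whitney-type decompositions of open sets, Christ-type systems of dyadic ``cubes'' comparable to balls, and the Hardy-Littlewood maximal function together with the associated Calder\'on-Zygmund stopping-time decomposition. All of these come with quantitative constants controlled by the doubling constant, which is precisely what one needs in order to transfer to the metric setting the classical Euclidean argument of Bennett, DeVore, and Sharpley. That argument bounds $f^{**}(t)-f^*(t)$ pointwise by a multiple of $\|f\|_{\BMO}$ and then uses monotonicity of $f^*$, together with a dyadic decomposition of intervals in $(0,\mu(X))$, to pass from this pointwise estimate to the $\BMO$ bound on $f^*$; local integrability of $f^*$ then follows automatically from the finiteness of the resulting bound.

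The main obstacle, as I see it, is the matching step: showing that the abstract hypothesis of the general theorem is indeed satisfied by the family of balls in a doubling space. If the hypothesis is phrased in terms of a dyadic or Whitney-like subfamily of $\cA$, then one must pay a controlled price to pass from balls to Christ cubes and back, using the fact that Christ cubes are comparable to balls with a ratio depending only on the doubling constant. Once this comparison is in place, the rest of the argument becomes essentially bookkeeping, and the final constant $c_*$ in \eqref{eq:BMO-bounded} inherits its dependence only on the doubling constant of $X$, as required.
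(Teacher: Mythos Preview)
Your proposal conflates two distinct routes. You begin by saying you will deduce the theorem from the paper's general measure-space criterion, but the argument you then sketch is the Bennett--DeVore--Sharpley weak-$L^\infty$ approach (bound $f^{**}-f^*$ pointwise, then pass to a $\BMO$ bound on $f^*$). That is precisely the path the paper explicitly avoids, and it is \emph{not} a verification of the hypothesis of the general criterion: Theorem~\ref{thm:BMO-bound} asks you to produce, for each nonnegative bounded $g$ and level $\gamma=(g^*)_{(0,t)}$, a $c_*$-Calder\'on--Zygmund decomposition in the precise sense of Definition~\ref{criterion}. Christ cubes, Whitney decompositions, and the Hardy--Littlewood maximal function are not needed for this, and the Besicovitch covering lemma is in general unavailable in doubling metric spaces. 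What the paper actually uses is only the basic $5r$-covering lemma and Lebesgue differentiation: for each $x\in E_\gamma(g)$ one takes the infimal radius $r(x)$ with $g_{B(x,5r(x))}\le\gamma$, applies the Vitali-type lemma to the family $\{B(x,r(x))\}$, and the resulting disjoint balls $B_i$ together with their dilates $\widetilde B_i=5B_i$ give the required decomposition with $c_*=c_5$; a truncation $g_k\uparrow g$ handles the possibility of unbounded radii via Lemma~\ref{lem:osc-approx}.

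There is also a circularity in your treatment of local integrability. You write that ``local integrability of $f^*$ then follows automatically from the finiteness of the resulting bound,'' but the mean oscillation of $f^*$ is only defined once $f^*$ is known to be locally integrable; you cannot read local integrability off a bound that presupposes it. The paper resolves this by first establishing the oscillation estimate for nonnegative bounded $g$ (where $g^*\in L^\infty\subset L^1_{\mathrm{loc}}$ is automatic), and then passing to general rearrangeable $f\in\BMO(X)$ via the monotone truncations $\phi_k\circ f$ and Lemma~\ref{lem:mono}, which simultaneously delivers $f^*\in L^1_{\mathrm{loc}}$ and the norm inequality.
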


In the case where $\mu(X)=\infty$, the boundedness 
\eqref{eq:BMO-bounded} can be derived from 
a result of Aalto~\cite{aa} which says that
functions of bounded mean oscillation
on a doubling measure metric space are
in weak-$L^\infty$. The space weak-$L^\infty$,
originally introduced in \cite{bdvs}, consists 
of all functions having finite decreasing rearrangement 
such that $f^{**}-f^*\in L^\infty(0,\mu(X))$.
Here, $f^{**}$ is the maximal function of 
$f^*$ as defined in \cite{hl} -- see equation (8.2) 
therein and the remark that follows it.

Our techniques do not go through weak-$L^\infty$ and 
apply regardless of whether $\mu(X)$ is finite or infinite. The main tool for the proof is a bound on the oscillation of
$f^*$ on intervals in terms of the oscillation of $f$ on a 
collection of balls obtained from a covering argument
of Calder\'{o}n-Zygmund type. This estimate is 
inspired by the work of Klemes \cite{kl}.
We calculate explicitly the 
dependence of the constant $c_*$ on the doubling constant 
of the measure $\mu$:
\begin{equation}
\label{eq:cstar}
c_*=\inf_{\lambda>3}\sup_{x\in X, r>0} \frac{\mu(B(x,\lambda r))}{\mu(B(x,r))}.
\end{equation}

The basic estimate, Lemma~\ref{lem:osc} below, applies to bounded functions, whose decreasing rearrangement is 
automatically locally integrable. In order to extend this 
to all functions of bounded mean oscillation, we approximate 
by either bounded functions or integrable functions via truncation. 
Note that the term `approximate' is 
misleading insofar as neither bounded 
functions nor integrable functions are dense in $\BMO$, and these 
truncations do not converge in the $\BMO$-seminorm. 
Nevertheless, the pointwise convergence is monotone 
and so estimates can be inferred from the truncation via 
the monotone convergence theorem.

In the first part of the paper, we establish a collection of lemmas which allow us to deduce mean oscillation bounds on the decreasing rearrangement of general rearrangeable functions in $\BMO(X)$ from corresponding estimates for nonnegative bounded functions, without going through the absolute value. The sharp bound $\|f^*\|_{\BMO(0,1)}\leq  \|f\|_{\BMO(0,1)}$,
which follows from work of Klemes \cite{kl} and Korenovskii \cite{ko1}, plays an important role. In upcoming related work~\cite{BDG}, we use these techniques to improve the constant in the bound 
\begin{equation}
\label{eq:Cn}
\|f^*\|_{\BMO(\R_+)}\leq C_n \|f\|_{\BMO(\R^n)}\,, \qquad n\ge 1\,,
\end{equation}
to $C_n=2^{\frac{n+1}{2}}$ (previous results of Bennett--DeVore--Sharpley~\cite{bdvs} imply Eq.~\eqref{eq:Cn} with $C_n=2^{n+5}$). 


\section{Preliminaries and notation}


\subsection{Decreasing rearrangement}

Let $(X,\cM, \mu)$ be a nontrivial measure space.  Given a real-valued measurable function $f$ on $X$,
define its {\em distribution function} as
$$
\mu_f(\alpha)=\mu(E_\alpha(f))\,,\qquad \alpha\ge 0\,,
$$
where $E_\alpha(f)=\{x\in X:|f(x)|>\alpha \}$
is the level set of $|f|$ at height $\alpha$. The function
$\mu_f:[0,\infty)\rightarrow[0,\infty]$ is decreasing and right-continuous. 
Note that throughout this paper, 
`decreasing' should be understood in the sense of `nonincreasing'.

Let $f$ be a real-valued measurable function
satisfying $\mu_f(\alpha)\rightarrow{0}$ as 
$\alpha\rightarrow\infty$. We call such a function {\it rearrangeable}, and define its {\em decreasing rearrangement} by
$$
f^\ast(s)= \inf\{\alpha\geq{0}:\mu_f(\alpha)\leq{s} \}\,, \qquad s> 0\,.  
$$
The value of $f^*(s)$ is finite for every $s>0$
because the set $\{\alpha\geq{0}:\mu_f(\alpha)\leq{s}\}$ 
is nonempty by assumption. As is the case with the distribution function, 
$f^*$ is decreasing and right-continuous. 
Note that if $\mu(X)<\infty$, then every real-valued measurable function 
is rearrangeable and we consider $f^*$ as a function on $(0,\mu(X))$. 

By construction, $f^*$ is {\it equimeasurable} with $|f|$: 
$$
|\{s\in(0,\mu(X)):f^\ast(s)>\alpha\}| = \mu_{f}(\alpha)\,,\qquad\alpha \ge 0\,,
$$
where we use $|\cdot|$ to denote Lebesgue measure. As a consequence, $f^\ast\in L^p(0,\mu(X))$ if and only if $f\in L^p(X)$, with $\|f^*\|_p=\|f\|_p$ for all $p\in [1,\infty]$. In particular, if $f\in L^\infty(X)$, then $f^\ast\in L^\infty(0,\mu(X))\subset \Loneloc(0,\mu(X))$. Note also that any $f\in L^p(X)$ for $p\in [1,\infty]$ is rearrangeable.   

The Hardy-Littlewood inequality \cite[Theorem 378]{hlp} states that $\int f^*g^*\ge \int |fg|$ for a pair of integrable functions $f$ and $g$.
This inequality is fundamental and extends to other classical rearrangements,
including Steiner symmetrization, two-point symmetrization,
and the symmetric decreasing rearrangement (see, for instance, \cite{bh}). A special case is the following.

\begin{lemma}[Hardy-Littlewood inequality]
If $f$ is integrable on $A\in\cM$, then
$$
\int_0^{\mu(A)}\!f^* \ge \int_A \! |f|\,.
$$
\end{lemma}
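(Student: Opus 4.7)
The plan is to prove both sides via the layer cake representation and then compare the resulting integrands using equimeasurability together with the elementary bound $\mu(A\cap E) \le \min(\mu(A),\mu(E))$.

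First I would rewrite the right-hand side. Since $|f| = \int_0^\infty \mathbf{1}_{\{|f|>\alpha\}}\,d\alpha$ pointwise, Tonelli's theorem gives
\[
\int_A |f|\,d\mu = \int_0^\infty \mu\bigl(A\cap E_\alpha(f)\bigr)\,d\alpha,
\]
and the trivial estimate $\mu(A\cap E_\alpha(f))\le \min\bigl(\mu(A),\mu_f(\alpha)\bigr)$ holds for every $\alpha\ge 0$.

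Next I would rewrite the left-hand side the same way. Since $f^*\ge 0$ is measurable on $(0,\mu(X))$, Tonelli gives
\[
\int_0^{\mu(A)} f^*(s)\,ds = \int_0^\infty \bigl|\{s\in(0,\mu(A)) : f^*(s)>\alpha\}\bigr|\,d\alpha.
\]
Because $f^*$ is decreasing and right-continuous and is equimeasurable with $|f|$, the full superlevel set $\{s>0 : f^*(s)>\alpha\}$ is an interval of the form $(0,\mu_f(\alpha))$ (possibly empty or the whole half-line). Intersecting with $(0,\mu(A))$ therefore has Lebesgue measure exactly $\min\bigl(\mu(A),\mu_f(\alpha)\bigr)$.

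Combining these two computations, the left-hand side equals $\int_0^\infty \min\bigl(\mu(A),\mu_f(\alpha)\bigr)\,d\alpha$, which pointwise in $\alpha$ dominates $\mu\bigl(A\cap E_\alpha(f)\bigr)$, and integrating yields the claim. The only real subtlety — which is not so much an obstacle as a point to be careful about — is justifying the identification of $\{s>0 : f^*(s)>\alpha\}$ with an interval of length $\mu_f(\alpha)$; this uses monotonicity and right-continuity of $f^*$ together with equimeasurability, both of which have already been recorded in the preliminaries.
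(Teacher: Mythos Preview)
Your argument is correct. The paper does not actually prove this lemma; it merely records it as the special case $g=\chi_A$ of the classical Hardy--Littlewood inequality $\int f^*g^*\ge \int |fg|$ (so that $g^*=\chi_{(0,\mu(A))}$), with a citation to Hardy--Littlewood--P\'olya. Your route is different: you give a direct, self-contained proof via the layer-cake formula, reducing the inequality to the pointwise bound $\mu(A\cap E_\alpha(f))\le \min(\mu(A),\mu_f(\alpha))$ and identifying the right-hand side with the measure of $\{s\in(0,\mu(A)):f^*(s)>\alpha\}$ using monotonicity and equimeasurability of $f^*$. This is in fact one of the standard ways to prove the general Hardy--Littlewood inequality itself, so in spirit the two approaches coincide; the practical difference is that your argument does not defer to an external reference and makes the mechanism of the inequality explicit, while the paper's one-line deduction emphasizes that the lemma sits inside a broader rearrangement inequality.
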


On several occasions, when $\mu(X) < \infty$, we will use another one-dimensional rearrangement. The {\em signed decreasing 
rearrangement} of a real-valued measurable function $f$ is defined by
$$
f^\circ(s)= (f_+)^*(s) - (f_-)^*(\mu(X)-s)\,, \qquad 0<s<\mu(X)\,.  
$$
Note that
\begin{equation}
\label{eq:circ-pm}
(-f)^\circ(s)= -f^\circ(\mu(X)-s)\,
\end{equation} 
holds for all but countably many $s \in (0,\mu(X))$. Moreover,
$f^\circ$ is equimeasurable with $f$:
$$
\bigl|\{s\in (0,\mu(X)):f^\circ(s)>\alpha \}\bigr|
=\mu(\{x\in X:f(x)>\alpha \})\,,\qquad \alpha\in\R \,.
$$
In general, $(f^\circ)^\ast=f^\ast$. 
If $f\geq{0}$ almost everywhere, then $f^\circ=f^\ast$.  

For functions which are bounded from either above or below, we can express $f^\circ$ in terms of the decreasing rearrangement, as shown by the following lemma.
This also allows us to rewrite $f^*$ in terms of the rearrangement of a vertical shift of $f$, avoiding $|f|$.

\begin{lemma}[Signed decreasing rearrangement]
\label{lem:up}
Let $f$ be a real-valued measurable function on 
a finite measure space $X$.
If $\essinf f \ge -\beta$ for some $\beta> 0$, then
$f^\circ(s)=(f+\beta)^*(s)-\beta
$
for all but countably many $s\in (0,\mu(X))$. In particular,
\begin{equation}
\label{eq:beta}
 f^* = ((f+\beta)^*-\beta)^*\,.
 \end{equation}
If, instead, $\esssup f \le \beta$, then
\begin{equation}
\label{eq:flipped}
f^\circ(s)=\beta-(\beta-f)^*(\mu(X)-s)
\end{equation}
for all but countably many $s\in (0,\mu(X))$, and $f^* = (\beta-(\beta-f)^*)^*$.
\end{lemma}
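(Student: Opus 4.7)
\medskip

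\noindent\textbf{Proof proposal.} The plan is to reduce both statements to the fact that two decreasing functions on $(0,\mu(X))$ with the same distribution function must agree outside of a (at most) countable set, namely the common set of jump discontinuities. Combined with the observation that adding a constant to a rearrangeable function shifts its signed decreasing rearrangement by that same constant, the identities will follow by direct computation of distribution functions.

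For the first part, assume $\essinf f\ge -\beta$, so that $f+\beta\ge 0$ almost everywhere and hence $|f+\beta|=f+\beta$. Set $h(s)=(f+\beta)^*(s)-\beta$ on $(0,\mu(X))$. Since $(f+\beta)^*$ is decreasing, so is $h$. I compute the distribution function of $h$ with respect to Lebesgue measure: for $\alpha\in\R$,
\begin{align*}
|\{s\in(0,\mu(X)):h(s)>\alpha\}|
&=|\{s:(f+\beta)^*(s)>\alpha+\beta\}|\\
&=\mu(\{x:f(x)+\beta>\alpha+\beta\})
=\mu(\{x:f(x)>\alpha\}),
\end{align*}
where in the second line I used that $(f+\beta)^*$ is equimeasurable with $|f+\beta|=f+\beta$ when $\alpha+\beta\ge 0$, and that $h\ge -\beta$ makes the case $\alpha<-\beta$ trivial. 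This matches the distribution function of $f$, which in turn equals that of $f^\circ$. Both $h$ and $f^\circ$ are decreasing on $(0,\mu(X))$, so they coincide outside a countable set, giving the first identity. The identity \eqref{eq:beta} then follows by applying $(\,\cdot\,)^*$ to both sides and invoking the general fact $(f^\circ)^*=f^*$ stated in the text, noting that $(\,\cdot\,)^*$ depends only on the distribution function and is therefore insensitive to changes on a countable set.

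For the second part, assume $\esssup f\le\beta$. The plan is to reduce to the first part by replacing $f$ with $-f$, which satisfies $\essinf(-f)\ge -\beta$. Applying the first part gives
$$
(-f)^\circ(s) = (-f+\beta)^*(s)-\beta = (\beta - f)^*(s)-\beta
$$
for all but countably many $s$. Combining this with \eqref{eq:circ-pm} yields
$$
f^\circ(\mu(X)-s) = -(-f)^\circ(s) = \beta - (\beta-f)^*(s),
$$
again outside a countable set, and replacing $s$ by $\mu(X)-s$ gives \eqref{eq:flipped}. Finally, for the identity $f^*=(\beta-(\beta-f)^*)^*$, I apply $(\,\cdot\,)^*$ to \eqref{eq:flipped}: the function $s\mapsto \beta-(\beta-f)^*(\mu(X)-s)$ has the same distribution function as $s\mapsto \beta-(\beta-f)^*(s)$ (change of variables $s\leftrightarrow \mu(X)-s$ preserves Lebesgue measure on $(0,\mu(X))$), so their decreasing rearrangements agree, and the left-hand side equals $(f^\circ)^*=f^*$.

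The only real subtlety, and the one place where care is needed, is the phrase \emph{all but countably many}: $f^\circ$ is defined via $(f_\pm)^*$ and need not be right-continuous, while $h$ defined above is right-continuous; hence one cannot expect pointwise equality everywhere, only outside the countable set of common jump points of two decreasing functions with identical distribution. This is handled cleanly by noting that decreasing functions are determined by their distribution function up to their values on this countable set, and that all subsequent applications of $(\,\cdot\,)^*$ are invariant under modifications on null sets.
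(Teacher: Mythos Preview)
Your proof is correct and follows essentially the same route as the paper's: both show that $h(s)=(f+\beta)^*(s)-\beta$ is a decreasing function equimeasurable with $f$ (hence with $f^\circ$) by computing level sets via $\mu_{f+\beta}(\alpha+\beta)$, conclude that $h$ and $f^\circ$ agree off the countable set of jumps, and then obtain the second assertion by applying the first to $-f$ together with Eq.~\eqref{eq:circ-pm}. Your treatment is slightly more explicit about the case $\alpha<-\beta$ and about why the reflection $s\mapsto\mu(X)-s$ is harmless when passing to $(\,\cdot\,)^*$, but there is no substantive difference in strategy.
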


\begin{proof} By definition of the decreasing
rearrangement, the function $g:= (f+\beta)^*-\beta$ is decreasing,
right-continuous, and takes values in $[-\beta,\infty)$.
Its level sets satisfy
$$
\bigl|\{s\in (0,\mu(X)): g(s) >\alpha\}\bigr| = \mu(\{x\in X: f(x) >\alpha\})\,,
$$
since both agree with $\mu_{f+\beta}(\alpha+\beta)$.
This determines
$g$ uniquely among decreasing, right-continuous
functions equimeasurable with $f$. 
Thus, $g$ agrees with $f^\circ$, except possibly at jump discontinuities,
of which there are at most countably many.

The second claim follows by replacing $f$ with $-f$ and 
using Eq.~\eqref{eq:circ-pm}.  Note that $f^*$ is also the decreasing rearrangement of the reflected function $f^\circ(\mu(X)-s)$.
\end{proof}

Finite measure is essential to the identity Eq.~\eqref{eq:beta}.  If $X$ has infinite measure and $\|f\|_{L^\infty}\le \beta$,
then $f^*\ge ((f+\beta)^*-\beta)^*$,
and the inequality is typically strict.
For instance, if $f(x)=-(\sin x)\mathcal{X}_{(0,\pi)}(x)$ on $\R_+$, then
$f^*(s)=\bigl(\cos\frac{s}{2}\bigr) \mathcal{X}_{(0,\pi)}(s)$,
while $((f+1)^*-1)^*=0$.

For more details on the decreasing rearrangement, we refer to \cite{sw}.

\subsection{Bounded mean oscillation}

Let $(X,\cM,\mu)$ be a measure space and let $f$ be a 
real-valued measurable function on $X$ that is 
integrable on $A$, where $A \in \cM$ with
$0<\mu(A)<\infty$. Denote by $f_A:= \fint_A f$ the {\em mean} of $f$ on $A$,
and by 
$$
\Omega(f,A):=\fint_{A}\!|f-f_A|
$$
the {\em mean oscillation} of $f$ on $A$. By the definition of the mean, the mean
oscillation can also be computed as
\begin{equation}
\label{eq:osc-alt}
\Omega(f,A)= 2\fint_{A}(f-f_A)_+ =2\fint_{A}(f-f_A)_-\,.
\end{equation}

A {\em basis} in $X$ is a collection $\cA \subset \cM$, with 
$0<\mu(A)<\infty$ for every $A\in \cA$, whose union covers~$X$. 
\begin{definition}\label{def:BMO}
Let $\cA$ be a basis in $X$.
We say that a function satisfying $f\in L^1(A)$ for all $A\in \cA$
is of {\em bounded mean oscillation} if 
\begin{equation}\label{eq:bmo}
\|f\|_{\BMO}:=\sup_{A\in \cA}\Omega(f,A)<\infty\,.
\end{equation}
The vector space of functions of bounded mean oscillation
is denoted by $\BMO(X)$.
\end{definition}

This abstract $\BMO(X)$ space has no \emph{a priori} geometry as the sets in the basis $\cA$ can be quite pathological.  
Since $\Omega(f+\alpha,A)=\Omega(f,A)$ for any $\alpha\in\R$,
Eq.~\eqref{eq:bmo} defines
only a seminorm on $X$ that vanishes 
on constant functions. Additional assumptions on
$X$ and $\cA$ are required to ensure that 
the quotient of $\BMO$ modulo constants is a Banach space (see, for instance, the considerations found in \cite{dgl}).

On a metric measure space, the natural choice for the basis $\cA$ is the collection of all balls, $\cB$ (see Section~\ref{sec:mms}). On $\R^n$ with Lebesgue measure, the resulting $\BMO$ space is equivalent, with seminorms that
agree up to a dimension-dependent factor, to the
classical one, defined with the basis of all cubes with
sides parallel to the axes. 

For a general reference on $\BMO$ functions on Euclidean space, see \cite{ko2}.


\subsection{Decreasing rearrangement 
and BMO in one dimension}

We collect here some properties of $\BMO$ functions 
in one dimension that will prove to be useful below. We 
maintain the convention that when $X\subset\R$ is an interval, 
the measure is the Lebesgue measure on $X$ and the basis is the 
collection of all non-empty finite open subintervals of $X$. 

\begin{lemma}\label{normcalculation}
If $f$ is a decreasing locally integrable
function on an open interval $X\subset \R$, then
$$
\|f\|_{\BMO}=\begin{cases} 
\max\left\{ \sup\limits_{0<t< T} \Omega(f,(0,t)), \sup\limits_{0< t<T} \Omega(f,(t,T))\right\}\,, 
\quad & X=(0,T)\,,\\
\sup\limits_{t>0}\Omega(f,(0,t))\,, & X=\R_+\,,\\
\frac12 (\sup f -\inf f)\,, & X=\R\,.
\end{cases}
$$
\end{lemma}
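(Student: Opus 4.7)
The formulas for $X = \R_+$ and $X = (0, T)$ each have an immediate $\ge$ inequality -- the right-hand side is a supremum of $\Omega(f, \cdot)$ over a subfamily of the intervals defining $\|f\|_{\BMO}$ -- while the $\le$ direction carries the content of the lemma. For $X = \R$, the right-hand side is a single real number rather than a supremum over intervals, so both inequalities require argument: the $\le$ direction via a pointwise bound on each $\Omega(f, J)$, and the $\ge$ direction via realizing the bound in the limit.

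For $X = \R$, my plan is to establish the pointwise bound $\Omega(f, J) \le (\esssup_J f - \essinf_J f)/2$ for every bounded interval $J$, which immediately upgrades to $\|f\|_{\BMO} \le (\sup f - \inf f)/2$. With $M = \esssup_J f$, $m = \essinf_J f$, $A = |\{f > f_J\} \cap J|$, and $B = |\{f < f_J\} \cap J|$, I would combine the identity $\int_J(f - f_J)_+ = \int_J(f_J - f)_+$ with the bounds $\int_J(f - f_J)_+ \le A(M - f_J)$, $\int_J(f_J - f)_+ \le B(f_J - m)$, and the AM-GM estimates $AB \le |J|^2/4$ and $(M - f_J)(f_J - m) \le (M - m)^2/4$ to obtain $\Omega(f, J) \le (M - m)/2$. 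Sharpness follows by taking $J = (-R, R)$: monotonicity ensures the existence of limits $f(\pm \infty)$, and a direct computation shows $f_{(-R, R)} \to (\sup f + \inf f)/2$ and $\Omega(f, (-R, R)) \to (\sup f - \inf f)/2$ when both limits are finite, while $\Omega(f, (-R, R)) \to \infty$ if either limit is infinite, matching the formula in all cases.

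For $X = \R_+$ and $X = (0, T)$, the substance is a reduction to anchored intervals: for any subinterval $(a, b) \subset X$, $\Omega(f, (a, b))$ is dominated by the supremum of $\Omega(f, \cdot)$ over subintervals anchored at the boundary of $X$ -- those of the form $(0, t)$ for $X = \R_+$, and of the form $(0, t)$ or $(t, T)$ for $X = (0, T)$. Granting this reduction, the formulas follow (a single supremum for $\R_+$, the maximum of two for $(0, T)$). I emphasize that this reduction cannot be derived from the pointwise comparison $\Omega(f, (a, b)) \le \max\{\Omega(f, (0, b)), \Omega(f, (a, T))\}$, which is false in general: a decreasing step function with a large drop in the middle of $X$ can produce interior intervals whose mean oscillation strictly exceeds either one-sided extension to the boundary of $X$.

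The main obstacle is establishing the reduction, which I plan to handle along the lines of Klemes \cite{kl} and Korenovskii \cite{ko1}. The idea is to interpret $\Omega(f, (a, b))$ as $(2/(b - a))$ times the maximal vertical deviation of the concave primitive $F(x) = \int_0^x f$ from its secant line on $[a, b]$, and to perform a variational analysis on the endpoints $(a, b)$ showing that any global maximizer of $(a, b) \mapsto \Omega(f, (a, b))$ can be moved to an anchored configuration without decreasing the value. The delicate point, which the variational argument must address, is that interior local maxima of this map do exist in general; the claim is that they are always dominated by the oscillation on some anchored subinterval.
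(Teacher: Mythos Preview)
Your argument for $X=\R$ is correct and more explicit than the paper, which simply cites Korenovskii's book for both the $\R$ and $\R_+$ cases.

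The gap is in the $(0,T)$ case. Your ``variational analysis on the endpoints'' remains a sketch: you do not address existence of a global maximizer of $(a,b)\mapsto\Omega(f,(a,b))$, and you yourself flag the ``delicate point'' that interior local maxima exist without explaining how they are dominated by anchored intervals. More importantly, you have ruled out the pointwise route prematurely. You are right that $\Omega(f,(a,b))\le\max\{\Omega(f,(0,b)),\Omega(f,(a,T))\}$ fails (your step-function example on $(0,2)$ gives $\tfrac12$ versus $\tfrac49$), but this does not preclude comparison with a \emph{different} anchored interval.

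The paper's argument is exactly such a pointwise comparison, and it is short. For monotone $f$, if $I\subset J$ and $f_I=f_J$ then $\Omega(f,I)\le\Omega(f,J)$; this is Klemes' observation (Property~2.15 in Korenovskii), and in fact it drops out of your own concave-primitive picture once one notes that the two secant lines, having equal slope, satisfy $\ell_I\ge\ell_J$ on $I$ by concavity. Given $I=(a,b)$, monotonicity of $f$ gives $f_{(a,T)}\le f_I\le f_{(0,b)}$; comparing $f_I$ with $f_{(0,T)}$ and using continuity of $t\mapsto f_{(0,t)}$ and $t\mapsto f_{(t,T)}$, the intermediate value theorem produces an anchored $J\supset I$ with $f_J=f_I$, hence $\Omega(f,I)\le\Omega(f,J)$. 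In your step-function example this yields $J=(0,2)$, where indeed $\Omega=\tfrac12$. No global variational argument is needed.
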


\begin{proof}
The proofs for the cases $X=\R_+$ and $X=\R$ can be 
found in \cite[Lemma 2.22 and Proposition 2.26, respectively]{ko2}.

For the case $X=(0,T)$, we will show
that for any interval $I=(a,b)\subset{X}$, there exists 
another interval $J$ of either the form $(0,t)$ with
$0<t\leq T$, or $(t,T)$ with $0\leq t<T$, such that 
$\Omega(f,I)\leq \Omega(f,J)$. By the monotonicity of $f$, 
it suffices to find such an interval $J\supset I$ 
for which $f_{J}=f_{I}$ (see \cite{kl} or \cite[Property 2.15]{ko2}). 

Since $f$ is decreasing, $f_{(a,T)} \leq f_I \leq f_{(0,b)}$.
If $f_{(0,T)}=f_I$, then we take $J=(0,T)$. 
If $f_{(0,T)}<f_I\leq f_{(0,b)}$, then by continuity
in $b$ the intermediate value theorem
yields an interval of the form $J=(0,t)$
with $f_J=f_I$. If 
$f_{(0,T)}>f_I\geq f_{(a,T)}$, then continuity 
in $a$ yields an interval of the form $J=(t,T)$.
We finally appeal to continuity once
more to restrict the suprema to proper subintervals with $0<t<T$. 
\end{proof}

When $X\subset\R$ is a finite interval, the following sharp result is known. 

\begin{theorem}[Klemes-Korenovskii Theorem]
If $f\in \BMO{(a,b)}$, then $f^\ast\in\BMO(0,b-a)$ with $\|f^*\|_{\BMO} \le \|f\|_{\BMO}$. 
\end{theorem}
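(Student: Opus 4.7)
The plan is to combine Lemma~\ref{normcalculation} with a Calder\'{o}n-Zygmund / rising-sun covering argument on $(a,b)$. Since $f^*$ is decreasing on $(0,b-a)$, Lemma~\ref{normcalculation} gives
\[
\|f^*\|_{\BMO(0,b-a)} = \max\Bigl\{\sup_{0<t<b-a}\Omega(f^*,(0,t)),\; \sup_{0<t<b-a}\Omega(f^*,(t,b-a))\Bigr\},
\]
so it suffices to bound each of these two suprema by $\|f\|_{\BMO(a,b)}$.

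For the first, fix $t\in(0,b-a)$ and set $c=(f^*)_{(0,t)}$. Monotonicity of $f^*$ yields some $\sigma\in(0,t]$ with $(f^*-c)_+ = (f^*-c)\mathbf{1}_{(0,\sigma)}$, so by \eqref{eq:osc-alt} and a short layer-cake computation using equimeasurability of $f^*$ with $|f|$,
\[
\tfrac{t}{2}\,\Omega(f^*,(0,t)) = \int_0^\sigma (f^*-c)\, ds = \int_E (|f|-c)_+\, dx,
\]
where $E=\{x\in(a,b):|f(x)|>c\}$ has measure $\sigma$. The key step is to apply a Riesz rising-sun / Calder\'{o}n-Zygmund argument to $|f|$ on $(a,b)$ at level $c$ to obtain a pairwise disjoint family $\{I_j\}$ of open subintervals of $(a,b)$ satisfying (i) $(|f|)_{I_j}=c$, (ii) $E\subset\bigcup_j I_j$ up to a null set, and (iii) $\sum_j|I_j|\le t$. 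Granted (i)--(iii), the zero-mean property of $|f|-c$ on each $I_j$ gives $\int_{I_j}(|f|-c)_+ = \tfrac{1}{2}|I_j|\,\Omega(|f|,I_j)$, hence
\[
\tfrac{t}{2}\,\Omega(f^*,(0,t)) \le \sum_j \int_{I_j}(|f|-c)_+ \le \tfrac{t}{2}\,\||f|\|_{\BMO(a,b)}.
\]
The $(t,b-a)$ case is symmetric, via a rising-sun construction from the right.

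To obtain the sharp constant $1$ on $\|f\|_{\BMO}$ rather than $\||f|\|_{\BMO}$, I would first reduce to bounded $f$ by truncation and monotone convergence (as outlined in the introduction), then shift by $\beta\ge\|f_-\|_\infty$ so that $f+\beta\ge 0$. Applying the preceding argument to the nonnegative function $f+\beta$ and invoking Lemma~\ref{lem:up} to identify $(f+\beta)^*-\beta$ with the signed rearrangement $f^\circ$ yields $\|f^\circ\|_{\BMO(0,b-a)}\le\|f+\beta\|_{\BMO(a,b)}=\|f\|_{\BMO(a,b)}$. Since $(f^\circ)^*=f^*$ and $f^\circ$ is signed-monotone on $(0,b-a)$, rerunning the covering argument in this one-dimensional setting---where the level sets of $|f^\circ|$ reduce to at most two intervals and the rising-sun intervals are explicit---yields $\|f^*\|_{\BMO(0,b-a)}\le\|f^\circ\|_{\BMO(0,b-a)}$. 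The main obstacle is property (iii): while the Riesz rising-sun supplies intervals satisfying (i) and (ii) when $c>(|f|)_{(a,b)}$, the total-length bound $\sum_j|I_j|\le t$ is delicate and requires finer control (for instance, Klemes' stopping-time construction terminating each interval at the first point where the running mean of $|f|$ equals $c$), and the boundary case $c\le(|f|)_{(a,b)}$ must be treated separately.
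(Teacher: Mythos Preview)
The paper does not prove this theorem independently; it simply records that the inequality follows by combining Klemes' result $\|f^\circ\|_{\BMO}\le\|f\|_{\BMO}$ with Korenovskii's result $\||f|\|_{\BMO}\le\|f\|_{\BMO}$, applied as $\|f^*\|_{\BMO}=\||f|^\circ\|_{\BMO}\le\||f|\|_{\BMO}\le\|f\|_{\BMO}$. Your outline is exactly this route, reconstructed from scratch: your rising-sun covering for a nonnegative function is Klemes' argument, your shift-by-$\beta$ reduction via Lemma~\ref{lem:up} recovers the signed Klemes bound $\|f^\circ\|_{\BMO}\le\|f\|_{\BMO}$, and your final passage from $f^\circ$ to $f^*$ is the Korenovskii step. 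So the approaches agree.

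Two comments on the sketch. First, you have correctly located the genuine work in property~(iii); note that once the rising-sun intervals are pairwise disjoint with $(|f|)_{I_j}\ge c$, the bound $\sum_j|I_j|\le t$ follows from the Hardy--Littlewood inequality and the monotonicity of $\tau\mapsto (f^*)_{(0,\tau)}$ exactly as in the proof of Lemma~\ref{lem:osc}, so the obstacle you flag is not as severe as you suggest (Klemes' stopping-time construction is needed to get the exact equality $(|f|)_{I_j}=c$, but only the inequality $\ge c$ is required for (iii)). Second, your closing claim that ``rerunning the covering argument'' for the monotone $f^\circ$ immediately yields $\|f^*\|_{\BMO}\le\|f^\circ\|_{\BMO}$ is the one place where more care is needed: the covering you describe produces intervals $I_j$ on which $|f^\circ|$ has mean $c$, and to bound $\Omega(|f^\circ|,I_j)$ by $\|f^\circ\|_{\BMO}$ rather than $\||f^\circ|\|_{\BMO}$ you must check that $f^\circ$ does not change sign on $I_j$, or otherwise carry out Korenovskii's argument. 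That step is not hard for monotone $f^\circ$, but it is not automatic, and it is precisely the content of Korenovskii's contribution.
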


The proof is an immediate consequence of the combination of results of Klemes \cite{kl} and Korenovskii \cite{ko1}. Klemes shows that if $f\in \BMO{(a,b)}$ then $f^\circ\in\BMO(0,b-a)$ with $\|f^\circ\|_{\BMO} \le \|f\|_{\BMO}$. This is clearly sharp, in the sense that the constant 1 cannot be decreased. Korenovskii uses this result to show that $|f|\in\BMO(a,b)$ if $f\in \BMO{(a,b)}$ with $\||f|\|_{\BMO} \le \|f\|_{\BMO}$. Again, this is clearly sharp, and is an improvement on the trivial bound $\||f|\|_{\BMO} \le 2\|f\|_{\BMO}$.  

\section{Technical tools}

\subsection{Truncation}

Truncation is an important tool for this paper as it often allows for the reduction of a proof to the case of bounded functions. The following lemma demonstrates that truncation is among a class of transformations that behave well with respect to both rearrangement and mean oscillation.

\begin{lemma}\label{trunc}
Let $\phi:\R\rightarrow\R$ be increasing. 
\begin{enumerate}
\item If $\phi$ is odd and $f$ is rearrangeable, then $(\phi\circ f)^*=\phi\circ f^*$.
\item If $\phi$ is non-expansive and $f$ is integrable on $A \in \cM$ with $0<\mu(A)<\infty$, then
$$
\Omega(\phi\circ f,A)\le \Omega(f,A)\,.
$$
\end{enumerate}
\end{lemma}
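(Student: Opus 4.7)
The plan is to treat the two parts of the lemma by different arguments. Part (1) rests on equimeasurability alone, while part (2) reduces to a first-order comparison based on the identity $\Omega(g,A)=2\fint_A(g-g_A)_+$ from Eq.~\eqref{eq:osc-alt}, combined with a carefully chosen constant.

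For part (1), I would first observe that since $\phi$ is odd and increasing, $\phi(0)=0$ and $\phi$ preserves the sign of its argument, so $|\phi(t)|=\phi(|t|)$ for every $t\in\R$. Because $f^*$ and $|f|$ have identical distribution functions, so do $\phi\circ f^*$ and $\phi\circ|f|=|\phi\circ f|$ (composition with a Borel function preserves equimeasurability). As the composition of a decreasing function with an increasing one, $\phi\circ f^*$ is itself decreasing. Since $(\phi\circ f)^*$ is the unique right-continuous decreasing function equimeasurable with $|\phi\circ f|$, and the monotone function $\phi\circ f^*$ has at most countably many jump discontinuities, the two agree at every common point of continuity, establishing the identity.

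For part (2), the key step is to replace the mean $f_A$ by a constant $c$ chosen so that $\phi(c)=(\phi\circ f)_A$. Such a $c$ exists because $\phi$ is continuous (being non-expansive) and monotone, so its range is an interval that must contain the average $(\phi\circ f)_A$. The increasing and non-expansive hypotheses yield the pointwise estimates $(\phi(s)-\phi(t))_{\pm}\le(s-t)_{\pm}$, which when applied in conjunction with Eq.~\eqref{eq:osc-alt} to $\phi\circ f$ (whose mean is now $\phi(c)$) give
$$
\Omega(\phi\circ f,A)=2\fint_A(\phi(f)-\phi(c))_+\le 2\fint_A(f-c)_+,
$$
and symmetrically $\Omega(\phi\circ f,A)\le 2\fint_A(c-f)_+$.

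To conclude, I would note that for any $c\in\R$ the two quantities $\fint_A(f-c)_+$ and $\fint_A(c-f)_+$ differ by $f_A-c$ and vary monotonically in opposite directions, meeting at $c=f_A$ with common value $\Omega(f,A)/2$; hence their minimum is bounded by $\Omega(f,A)/2$ uniformly in $c$, which gives the desired estimate. I expect the main subtlety to be recognizing this need for constant selection: the naive choice $c=f_A$ in the pointwise inequality only produces $\fint_A|\phi(f)-\phi(f_A)|\le\Omega(f,A)$, and converting this bound to $\Omega(\phi\circ f,A)$ then requires a triangle-inequality shift by $|(\phi\circ f)_A-\phi(f_A)|$, introducing an unwanted factor of $2$. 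Forcing $\phi(c)$ to equal the actual mean of $\phi\circ f$ eliminates this loss and yields the sharp constant $1$.
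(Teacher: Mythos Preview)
Your proof is correct. For part~(1) you argue along the same lines as the paper (which simply cites Lieb--Loss for the identity $\phi\circ f^*=(\phi\circ|f|)^*$ and then uses oddness exactly as you do). For part~(2), however, you take a genuinely different route. The paper does not construct a constant $c$ with $\phi(c)=(\phi\circ f)_A$; it uses $c=f_A$ directly and splits into cases on the sign of $(\phi\circ f)_A-\phi(f_A)$. If this difference is nonnegative, then pointwise
\[
\bigl(\phi\circ f-(\phi\circ f)_A\bigr)_+ \le \bigl(\phi\circ f-\phi(f_A)\bigr)_+ \le (f-f_A)_+\,,
\]
and Eq.~\eqref{eq:osc-alt} finishes; the other case is handled symmetrically with negative parts. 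So your worry that ``the naive choice $c=f_A$'' forces a factor of~$2$ is misplaced: that loss occurs only if one works with the full modulus $|\phi(f)-\phi(f_A)|$ and then shifts by triangle inequality, whereas the case split combined with Eq.~\eqref{eq:osc-alt} avoids it entirely. Your approach trades the case distinction for an appeal to the intermediate value theorem and the auxiliary observation that $\min\bigl\{\fint_A(f-c)_+,\fint_A(c-f)_+\bigr\}\le\tfrac12\Omega(f,A)$ for every~$c$; both arguments are clean, but the paper's is a line or two shorter.
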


\begin{proof} 
Since $\phi$ is increasing, it follows from 
\cite[Property (v) in Section 3.3]{ll} 
that $\phi\circ f^*=(\phi\circ |f|)^*$. (The property
is stated for the symmetric decreasing 
rearrangement, but holds, with the same proof,
for the decreasing rearrangement.) 
Therefore,
$$
\phi\circ f^* =(\phi\circ |f|)^* = |\phi\circ f|^*=(\phi\circ f)^*\,,
$$
showing (1). We have used that $\phi$ is odd in the second step.

If $(\phi\circ f)_A \ge \phi(f_A)$, we use that $\phi$ is 
non-expansive to obtain
$$
\fint_A (\phi\circ f -(\phi\circ f)_A)_+
\le \fint_A \bigl(\phi\circ f -\phi(f_A)\bigr)_+ 
\le \fint_A (f-f_A)_+
$$
as $f(x)> f_A$ if $\phi\circ f(x)> \phi(f_A)$. The first formula 
in Eq.~\eqref{eq:osc-alt} yields the result. 
If $(\phi\circ f)_A < \phi(f_A)$, 
we similarly estimate the second formula in Eq.~\eqref{eq:osc-alt} to obtain (2). 
\end{proof}

When $\phi$ is non-expansive but not monotone,
then by writing it as the difference of two
monotone functions, one has that
$\Omega(\phi\circ f, A)\le 2\,\Omega(f, A)$. This is true, in particular, when $\phi(\alpha)=|\alpha|$.

For $\beta>0$, the functions
\begin{align}
\label{innertrunc}
\phi_\beta(\alpha)&:=\min\{\alpha_+,\beta\}-\min\{\alpha_-,\beta\}\\
\label{outertrunc}
\phi^\beta(\alpha)&:=(\alpha-\beta)_+-(\alpha-\beta)_-
\end{align}
satisfy the hypotheses of both (1) and (2) in the preceding lemma. The truncations $\phi_{\beta}\circ f$ and $\phi^\beta\circ f$ commute with rearrangement and reduce mean oscillation. The decomposition $f= \phi_{\beta}\circ f+\phi^{\beta}\circ f$ splits $f$ horizontally into a bounded function $\phi_{\beta}\circ f$ taking values between $-\beta$ and $\beta$, and the remainder $\phi^{\beta}\circ f$.  

As mentioned above, bounded functions are not dense in $\BMO$, and truncations do not approximate 
$f\in\BMO{}(X)$ in the seminorm.
Nevertheless, these truncation techniques, in the form of the following lemma, allow us to transfer bounds for the rearrangements of nonnegative functions to bounds for sign-changing functions, without increasing the constants

\begin{lemma}\label{lem:pm} Let $X$ be a semi-finite measure space with
$\mu(X)=\infty$. Then,
$$\|f^*\|_{\BMO} \leq \max \left\{ \|(f_+)^*\|_{\BMO}, \|(f_-)^*\|_{\BMO}
\right\}$$
for any function $f\in L^\infty(X)$.
\end{lemma}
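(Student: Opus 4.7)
The plan is to reduce the claim to bounding $\Omega(f^*,(0,t))$ for each fixed $t>0$ using Lemma~\ref{normcalculation}, and then exploit the distributional identity stemming from $|f|=f_++f_-$ with disjoint supports. Since $\mu_{|f|}=\mu_{f_+}+\mu_{f_-}$, equimeasurability combined with layer-cake integration gives, for every $c\geq 0$,
\begin{equation}\label{eq:cake}
\int_0^\infty (f^*-c)_+\,ds = \int_0^\infty ((f_+)^*-c)_+\,ds + \int_0^\infty ((f_-)^*-c)_+\,ds.
\end{equation}

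Fix $t>0$ and set $c=\fint_0^t f^*$. Since $f^*$ is decreasing, $f^*(t)\leq c$, so $(f^*-c)_+$ is supported in $(0,t]$, and the zero-mean property of $f^*-c$ on $(0,t)$ gives $\int_0^\infty(f^*-c)_+=\tfrac{t}{2}\Omega(f^*,(0,t))$. Next, choose $t_\pm\geq 0$ such that $\fint_0^{t_\pm}(f_\pm)^*=c$; the same reasoning applied to $(f_\pm)^*$ yields $\int_0^\infty((f_\pm)^*-c)_+=\tfrac{t_\pm}{2}\Omega((f_\pm)^*,(0,t_\pm))$. Substituting into \eqref{eq:cake} produces the key identity
\begin{equation}\label{eq:keyid}
t\,\Omega(f^*,(0,t)) = t_+\,\Omega((f_+)^*,(0,t_+)) + t_-\,\Omega((f_-)^*,(0,t_-)).
\end{equation}

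The main obstacle is to establish $t_++t_-\leq t$. By construction, $c(t_++t_-)=\int_0^{t_+}(f_+)^*+\int_0^{t_-}(f_-)^*$. The distributional identity above shows that $f^*$ is the decreasing rearrangement of $(f_+)^*\oplus(f_-)^*$ viewed on the disjoint union $\R_+\sqcup\R_+$, so the Hardy-Littlewood inequality bounds the right-hand side by $\int_0^{t_++t_-}f^*=(t_++t_-)\fint_0^{t_++t_-}f^*$. Hence $c\leq\fint_0^{t_++t_-}f^*$, and since $s\mapsto\fint_0^s f^*$ is decreasing (as $f^*$ is) and equals $c$ at $s=t$, this forces $t_++t_-\leq t$.

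Dividing \eqref{eq:keyid} by $t$, bounding each $\Omega((f_\pm)^*,(0,t_\pm))\leq\|(f_\pm)^*\|_{\BMO}$, and using $t_++t_-\leq t$ yields $\Omega(f^*,(0,t))\leq\max\{\|(f_+)^*\|_{\BMO},\|(f_-)^*\|_{\BMO}\}$, which upon taking supremum over $t>0$ and invoking Lemma~\ref{normcalculation} completes the argument. The degenerate regimes are benign: when $c>\esssup(f_\pm)^*$ I set $t_\pm=0$ so the corresponding term in \eqref{eq:keyid} vanishes, while the case $c=\lim_{s\to\infty}(f_\pm)^*$ forces $f^*$ to be constant on $\R_+$, making $\Omega(f^*,(0,t))=0$ trivially.
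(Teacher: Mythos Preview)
Your argument is correct and takes a genuinely different route from the paper. The paper first uses the outer truncation $\phi^\beta$ with $\beta=f^*(t)$ to reduce to the case $\mu_f(0)\le t$, then invokes semi-finiteness together with $\mu(X)=\infty$ to produce a set $A\supset E_0(f)$ of large finite measure, and applies the Klemes--Korenovskii theorem to $g^\circ$, where $g=f|_A$; the estimate on $\|g^\circ\|_{\BMO(0,\mu(A))}$ is then obtained from Lemma~\ref{normcalculation} by letting $\mu(A)\to\infty$. Your proof bypasses all of this: the layer-cake identity $\mu_{|f|}=\mu_{f_+}+\mu_{f_-}$ gives the exact splitting \eqref{eq:keyid}, and the Hardy--Littlewood inequality applied to $(f_+)^*\oplus(f_-)^*$ on the disjoint union $\R_+\sqcup\R_+$ (whose decreasing rearrangement is indeed $f^*$) delivers $t_++t_-\le t$. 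This is more elementary---no Klemes--Korenovskii, no signed rearrangement, no truncation---and notably never uses semi-finiteness of $X$; only $\mu(X)=\infty$ enters, through Lemma~\ref{normcalculation}. The paper's route, on the other hand, fits its broader strategy of reducing everything to the sharp one-dimensional theorem.

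Two small points to tighten. First, the map $s\mapsto\fint_0^s f^*$ is only nonincreasing, so ``$\fint_0^{t_++t_-}f^*\ge c$ forces $t_++t_-\le t$'' needs the remark that if $t_++t_->t$ then this average is constant on $[t,t_++t_-]$, which forces $f^*\equiv c$ on $(0,t)$ and hence $\Omega(f^*,(0,t))=0$. Second, the boundary case $c=(f_\pm)^*(0^+)$ should be absorbed into your ``$t_\pm=0$'' clause along with $c>(f_\pm)^*(0^+)$, since in both situations $((f_\pm)^*-c)_+=0$ almost everywhere.
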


\begin{remark}
By the proof of Case 3 of Lemma~\ref{lem:BMO-bound} below, the conclusion of Lemma~\ref{lem:pm} is valid for all rearrangeable $f\in \BMO{}(X)$.
\end{remark}


\begin{proof}
By Lemma~\ref{normcalculation}, it suffices to show that
\begin{equation}
\label{eq:pm}
\Omega(f^*,(0,t))\le 
\max \left\{ \|(f_+)^*\|_{\BMO}, \|(f_-)^*\|_{\BMO}\right\}
\end{equation}
for all $t>0$.

If $f^*$ is constant on $(0,t)$, there is nothing
to show. Otherwise, $f^*(0)>f^*(t)$.
Consider $\phi^\beta(\alpha)$ as in Eq. \eqref{outertrunc}, with $\beta:=f^*(t)$. Since $f^*(s)\geq\beta$ on $(0,t)$, part (1) of Lemma \ref{trunc} gives us that $\Omega((\phi^{\beta}\circ f)^*, (0,t))=\Omega(f^*-\beta,(0,t))=\Omega(f^*,(0,t))$. Moreover, as $(\phi^\beta\circ f)_{\pm}=\phi^\beta\circ(f_{\pm})$, both parts of Lemma~\ref{trunc} imply that 
$$
\|((\phi^\beta \circ f)_{\pm})^*\|_{\BMO} =\|\phi^\beta \circ (f_{\pm})^*\|_{\BMO}\leq \|(f_{\pm})^*\|_{\BMO}\,,
$$
showing that composition with $\phi^\beta$ reduces the right-hand side of Eq.~\eqref{eq:pm}. Thus, we may assume, without loss of generality, that $f^*(t)=0$. That is, $\mu_f(0)=|E_0(f)|\le  t$ and $f^\ast\in L^1(\R_+)$. 

The assumption that $X$ is semi-finite but not finite ensures the existence of a measurable subset $A\subset X$ containing $E_0(f)$ with $t<\mu(A)<\infty$ - see \cite[Exercise 1.14]{fol}. Since $f^*$ vanishes outside $(0,t)$, its restriction to
$(0,\mu(A))$ satisfies
$$ 
f^*\big\vert_{(0,|A|)}=(f\big\vert_A)^*\,.
$$
Writing $g:=f\big\vert_A$, consider its signed decreasing rearrangement,
$$
g^\circ(s) = (f_+)^*(s)-(f_-)^*(\mu(A)-s)\,,\qquad s\in (0,\mu(A))\,.
$$
As $(g^\circ)^*=(f\big\vert_A)^*=f^*\big\vert_{(0,\mu(A))}$, the Klemes-Korenovskii theorem implies that
\begin{equation}
\label{eq:pm-Klemes}
\Omega(f^*, (0,t)) \le \|g^\circ\|_{\BMO(0,\mu(A))}\,.
\end{equation}

Next, we estimate the norm of $g^\circ$.
If $\mu(A)>2t$, then $g^\circ(s) = (f_+)^*(s)$ for all $0<s<t$ and $g^\circ(s) = -(f_-)^*(\mu(A)-s)$ for all $t<s<\mu(A)$. It follows that for $\tau \leq t$, we have
$$
\Omega(g^\circ, (0,\tau)) = \Omega((f_+)^*, (0,\tau))  \le  \|(f_+)^*\|_{\BMO}\,,
$$
and for $\tau > t$, we have 
$$
\Omega(g^\circ, (\tau,\mu(A))) = \Omega((f_-)^*, (0,\tau))  \le  \|(f_-)^*\|_{\BMO}\,.
$$

It remains to consider intervals of the form $(0,\tau)$ and $(\tau,\mu(A))$ 
that intersect both $\{s\in(0,\mu(A)): (f_+)^*(s)>0\}$ 
and $\{s\in(0,\mu(A)): (f_-)^*(s)>0\}$. As the measure of 
each of these sets is finite and independent of $\mu(A)$, we can 
take $\mu(A)$ large enough so that the mean oscillation of 
$g^\circ$ on either $(0,\tau)$ or $(\tau,\mu(A))$ is as small 
as we would like.  Therefore, it follows from 
Lemma~\ref{normcalculation} that
$$\|g^\circ\|_{\BMO(0,\mu(A))}\le  \max \left\{ \|(f_+)^*\|_{\BMO}, 
\|(f_-)^*\|_{\BMO}\right\}\,.$$
Inserting this into Eq.~\eqref{eq:pm-Klemes}
completes the proof of Eq.~\eqref{eq:pm}.
\end{proof}

\subsection{Limiting arguments}

The following 
result allows us to transfer mean oscillation properties of the decreasing rearrangement from an approximating sequence to its limit. It relies on the fact that both mean oscillation and the decreasing rearrangement behave well under monotone convergence.

\begin{lemma}
\label{lem:mono}
Let $f$ be  a rearrangeable function on $X$,
and suppose $\{f_k\}$ is a sequence of real-valued measurable
functions with $|f_k|\uparrow |f|$ pointwise and $f_k^* \in \Loneloc(0,\mu(X))$ for each $k$.
If for some $0<t<\mu(X)$,
$$
M:= \sup_{k} \Omega (f_k^*,(0,t))<\infty\,,
$$
then $f^*\in\Loneloc(0,\mu(X))$. Moreover, for every finite subinterval $I\subset(0,\mu(X))$,
$$
\Omega(f^*,I)\le \liminf_{k\to\infty} \Omega(f_k^*,I),
$$
and, in  particular, $\Omega(f^*, (0,t))\le M$.
\end{lemma}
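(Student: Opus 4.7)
The plan is to combine the monotone convergence of decreasing rearrangements with the given mean oscillation bound. Since $|f_k|\uparrow|f|$ pointwise, for every $\alpha\ge 0$ the level sets $\{|f_k|>\alpha\}$ increase to $\{|f|>\alpha\}$, whence $\mu_{f_k}(\alpha)\uparrow\mu_f(\alpha)$. A short argument from the definitions (if $\lim_k f_k^*(s_0)<\alpha<f^*(s_0)$ at a continuity point $s_0$ of $f^*$, then $\mu_{f_k}(\alpha)\le s_0$ for all $k$, contradicting $f^*(s_0)>\alpha$) gives $f_k^*\uparrow f^*$ at every continuity point, and hence almost everywhere on $(0,\mu(X))$ since $f^*$ is monotone.

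The main obstacle is showing $f^*\in L^1(0,t)$, since a bound on mean oscillation does not by itself control the integral of a decreasing function; rearrangeability of $f$ must be invoked in tandem. Fix any $s_0\in(0,t)$ and set $c_k:=(f_k^*)_{(0,t)}$; rearrangeability gives $f^*(s_0)<\infty$, and monotonicity $f_k^*\le f^*$ yields $f_k^*(s)\le f^*(s_0)$ for all $s\ge s_0$. If $c_k>f^*(s_0)$, then the decreasing function $f_k^*$ satisfies
$$
Mt\ge \int_0^t |f_k^*-c_k| \ge \int_{s_0}^t (c_k-f_k^*) \ge (t-s_0)\bigl(c_k-f^*(s_0)\bigr)\,,
$$
whence $c_k\le f^*(s_0)+Mt/(t-s_0)$ uniformly in $k$. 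Thus $\int_0^t f_k^*=tc_k$ is uniformly bounded in $k$, and monotone convergence gives $\int_0^t f^*<\infty$. Together with the boundedness of $f^*$ on compact subsets of $(0,\mu(X))$ away from $0$ (again by rearrangeability), this yields $f^*\in\Loneloc(0,\mu(X))$, and in fact $f^*\in L^1(I)$ for every finite subinterval $I\subset(0,\mu(X))$.

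Once we have $f^*\in L^1(I)$, the liminf inequality is routine. Monotone convergence of $f_k^*\uparrow f^*$ on $I$ forces $(f_k^*)_I\to(f^*)_I$, so the nonnegative integrands $|f_k^*-(f_k^*)_I|$ converge a.e.\ to $|f^*-(f^*)_I|$. Fatou's lemma then gives
$$
\int_I |f^*-(f^*)_I|\le \liminf_{k\to\infty}\int_I|f_k^*-(f_k^*)_I|\,,
$$
which, after dividing by $|I|$, reads $\Omega(f^*,I)\le\liminf_k\Omega(f_k^*,I)$. Specializing to $I=(0,t)$ yields $\Omega(f^*,(0,t))\le M$.
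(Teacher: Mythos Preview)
The proposal is correct and follows essentially the same strategy as the paper's proof: establish $f_k^*\uparrow f^*$ from $\mu_{f_k}\uparrow\mu_f$, bound the averages $(f_k^*)_{(0,t)}$ uniformly by comparing to a finite value of $f^*$ at an interior point (the paper uses the median property at $s_0=t/2$, you use an arbitrary $s_0\in(0,t)$ and estimate the tail integral directly), then conclude with monotone convergence and Fatou's lemma. The only cosmetic difference is that the paper obtains pointwise convergence $f_k^*\uparrow f^*$ everywhere rather than a.e., but your a.e.\ version suffices for the argument.
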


\begin{proof} Since $E_\alpha(f_k)\subset E_\alpha(f_{k+1})\subset E_\alpha(f)$ and $\bigcup_{k} E_\alpha(f_k)=E_\alpha(f)$ for all $\alpha>0$, we have that $\mu_{f_k}(\alpha)\uparrow
\mu_f(\alpha)$ by continuity of measure from below
and that $f_k^*\uparrow f^*$ pointwise. 

We first show that $f^*$ is integrable over $(0,t)$.
Since $f_k^*$ is decreasing, its value at $\frac t 2 $
is a median for $f_k^*$ on $(0,t)$; i.e., 
it defines a constant function of minimal distance to $f_k^*$ in the $L^1$ norm. Therefore, 
$$
(f_k^*)_{(0,t)}-f_k^*\bigl(\tfrac{t}{2}\bigr)
\le \fint_0^t \!|f_k^*(s)-f_k^*( \tfrac t 2)|\, ds
\le \Omega(f_k^*, (0,t)) \le M\,.
$$
By monotone convergence, 
$$
f^*_{(0,t)} = \lim_{k\to\infty} (f_k^*)_{(0,t)}
\le \lim_{k\to\infty} 
f_k\bigl(\tfrac{t}{2}\bigr) +  M = f^*(\tfrac t 2) + M <\infty\,.
$$
Since $f^*$ is decreasing, it follows that
$f^\ast\in \Loneloc(0,\mu(X))$. 

On every finite interval $I\subset (0,\mu(X))$, 
the means satisfy
$(f_k^*)_I\uparrow f^*_I$ by monotone convergence.
Hence $|f_k^* - (f_k^*)_I|$ converges
pointwise to $|f^*-(f^*)_I|$, almost everywhere on $I$.
It follows from Fatou's lemma 
that $\Omega(f^*,I) \le \liminf \Omega(f_k^*, I)$.
\end{proof}

We now show how to extend mean oscillation bounds from nonnegative bounded functions to general rearrangeable functions in $\BMO$.

\begin{lemma}
\label{lem:BMO-bound}
Let $X$ be a semi-finite measure space and $c_*\ge 1$.
Assume that
\begin{equation}
\label{eq:bounded-proof-5}
\Omega(g^*,(0,t))\le c_* \|g\|_{\BMO}
\end{equation}
holds for every nonnegative $g\in L^\infty(X)$ and all $0 < t < \mu(X)$.

Then the decreasing rearrangement of every rearrangeable function $f\in\BMO(X)$
is locally integrable and 
\begin{equation}
\label{eq:BMO-bound}
\|f^\ast\|_{\BMO}\le c_*\|f\|_{\BMO}.
\end{equation}
\end{lemma}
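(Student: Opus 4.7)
The plan is to prove Eq.~\eqref{eq:BMO-bound} first for bounded $f$ and then extend to all rearrangeable $f\in\BMO(X)$ by a truncation argument. For the reduction, given rearrangeable $f\in\BMO(X)$, I set $f_k:=\phi_k\circ f$ with $\phi_k$ as in Eq.~\eqref{innertrunc}. Each $f_k$ is bounded, $|f_k|\uparrow|f|$ pointwise, and Lemma~\ref{trunc}(2) applied to the non-expansive increasing map $\phi_k$ yields $\|f_k\|_{\BMO}\le\|f\|_{\BMO}$. Assuming the bounded case, $\|f_k^*\|_{\BMO}\le c_*\|f_k\|_{\BMO}\le c_*\|f\|_{\BMO}$ for every $k$; in particular $\sup_k\Omega(f_k^*,(0,t))<\infty$ for any $0<t<\mu(X)$, so Lemma~\ref{lem:mono} yields $f^*\in\Loneloc(0,\mu(X))$ together with $\Omega(f^*,I)\le\liminf_k\Omega(f_k^*,I)\le c_*\|f\|_{\BMO}$ for every finite subinterval $I\subset(0,\mu(X))$. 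Taking the supremum over $I$ establishes Eq.~\eqref{eq:BMO-bound}.

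It remains to prove the bound for $f\in L^\infty(X)$, and I split according to the size of $\mu(X)$, since Lemma~\ref{normcalculation} treats the two cases differently. When $\mu(X)=\infty$, Lemma~\ref{lem:pm} reduces the problem to controlling $\|(f_+)^*\|_{\BMO}$ and $\|(f_-)^*\|_{\BMO}$. Since $f_\pm$ are nonnegative and bounded, the hypothesis gives $\Omega((f_\pm)^*,(0,t))\le c_*\|f_\pm\|_{\BMO}$ for all $t>0$, and the $X=\R_+$ case of Lemma~\ref{normcalculation} promotes this to $\|(f_\pm)^*\|_{\BMO}\le c_*\|f_\pm\|_{\BMO}$. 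That $\|f_\pm\|_{\BMO}\le\|f\|_{\BMO}$ follows from Lemma~\ref{trunc}(2) applied to $\phi(\alpha)=\alpha_+$ directly for $f_+$, and to $-f$ for $f_-=(-f)_+$ (using that $\Omega$ is invariant under negation).

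When $0<\mu(X)=T<\infty$, set $\beta:=\|f\|_\infty$, so that $g_1:=f+\beta$ and $g_2:=\beta-f$ are both nonnegative, bounded, and satisfy $\|g_i\|_{\BMO}=\|f\|_{\BMO}$. By Eq.~\eqref{eq:beta}, $f^*=((f+\beta)^*-\beta)^*$, so the Klemes--Korenovskii theorem (together with translation-invariance of $\|\cdot\|_{\BMO}$) reduces the task to showing $\|(f+\beta)^*\|_{\BMO(0,T)}\le c_*\|f\|_{\BMO}$. By the $X=(0,T)$ case of Lemma~\ref{normcalculation}, this in turn amounts to bounding $\Omega((f+\beta)^*,(0,t))$ and $\Omega((f+\beta)^*,(t,T))$ uniformly for $0<t<T$. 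The first is immediate from the hypothesis applied to $g_1$. For the second, combining both parts of Lemma~\ref{lem:up} with $\beta$ playing both roles yields the reflection formula $(f+\beta)^*(s)=2\beta-(\beta-f)^*(T-s)$, whence $\Omega((f+\beta)^*,(t,T))=\Omega((\beta-f)^*,(0,T-t))$; the hypothesis applied to $g_2$ then provides the desired bound.

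The main obstacle I anticipate is the $(t,T)$-interval estimate in the finite-measure case, since the hypothesis only provides information on initial segments $(0,t)$. The resolution is the reflection identity obtained by combining both parts of Lemma~\ref{lem:up}, which effectively trades the right endpoint $T$ for the left endpoint $0$ by passing from $f+\beta$ to $\beta-f$. Everywhere else, the estimates follow in a fairly mechanical way from the cited lemmas.
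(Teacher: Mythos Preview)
Your proposal is correct and follows essentially the same approach as the paper: the truncation reduction via $\phi_k$ and Lemma~\ref{lem:mono}, the use of Lemma~\ref{lem:pm} together with $\|f_\pm\|_{\BMO}\le\|f\|_{\BMO}$ in the infinite-measure case, and the combination of Eq.~\eqref{eq:beta} with Klemes--Korenovskii plus the reflection identity from Lemma~\ref{lem:up} in the finite-measure case are exactly the paper's ingredients. The only cosmetic difference is that the paper treats nonnegative bounded $f$ first and then passes to sign-changing $f$ via $f+\beta$, whereas you fold these together by working with $f+\beta$ from the outset; your reflection formula $(f+\beta)^*(s)=2\beta-(\beta-f)^*(T-s)$ is precisely the paper's Eq.~\eqref{eq:flipped} applied to the nonnegative function $f+\beta$.
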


\begin{proof}
Let $f$ be a rearrangeable function  in $\BMO(X)$.  

{\em Case 1: $\mu(X)=\infty$, $f\in L^\infty(X)$.}\ 
If $f$ is nonnegative, then Eq.~\eqref{eq:BMO-bound} follows from the hypothesis  Eq.~\eqref{eq:bounded-proof-5}
via Lemma~\ref{normcalculation}.

If $f$ changes sign, then $f_+$ and $f_-$ are nonnegative and bounded, and 
therefore 
$\|(f_{\pm})^*\|_{\BMO} \le c_* \|f_{\pm}\|_{\BMO}$. 
From Lemma~\ref{trunc}, $\|f_{\pm}\|_{\BMO}\leq \|f\|_{\BMO}$. 
An application of Lemma~\ref{lem:pm} gives
$$
\|f^*\|_{\BMO} \le 
\max\left\{\|(f_+)^*\|_{\BMO}, \|(f_-)^*\|_{\BMO}\right\}
\le c_*\|f\|_{\BMO} \,.
$$

\smallskip {\em Case 2: $\mu(X)<\infty$, $f\in L^\infty(X)$.}\ 
Set $\beta=\|f\|_{L^\infty}$.
If $f$ is nonnegative, we have, as in Case 1, that Eq.~\eqref{eq:bounded-proof-5}  holds for  $0<t<\mu(X)$.
Since $\mu(X)<\infty$, 
according to Lemma~\ref{normcalculation}
we must also bound the oscillation of $f^*$ on $(t,\mu(X))$ for $0<t<\mu(X)$.  
As $f^\ast=f^\circ$, Eq.~\eqref{eq:flipped} implies that
$$
f^*(s)=\beta-(\beta-f)^*(\mu(X)-s)\,,\qquad a.e.\ s\in (0,\mu(X))\,.
$$
From this and Eq.~\eqref{eq:bounded-proof-5} applied to $\beta-f$, which is nonnegative, we have that 
\begin{equation}
\label{eq:bounded-proof-2}
\Omega(f^*, (t,\mu(X))) =\Omega((\beta-f)^*, (0,\mu(X)-t))\le
c_* \|\beta-f\|_{\BMO} = c_* \|f\|_{\BMO} \,
\end{equation}
for all $0<t<\mu(X)$. 
By Lemma~\ref{normcalculation}, 
Eqs.~\eqref{eq:bounded-proof-5} and~\eqref{eq:bounded-proof-2}
yield Eq.~\eqref{eq:BMO-bound} for nonnegative bounded $f$.

If $f$ changes sign, we apply the arguments above to 
get Eq.~\eqref{eq:BMO-bound} for the nonnegative function 
$f + \beta$, which has the same $\BMO$ norm as $f$. This observation, 
Eq.~\eqref{eq:beta}, and the Klemes-Korenovskii theorem give
$$\|f^*\|_{\BMO} \le \|(f+\beta)^*-\beta\|_{\BMO}
= \|(f+\beta)^*\|_{\BMO}\le  c_*\|f\|_{\BMO}\,.
$$

\smallskip{\em Case 3: Rearrangeable $f\in\BMO{}{}(X)$.}\ 
We approximate $f$ by the truncations $f_k:=\phi_k\circ f$, where $\phi_k$ is defined as in Eq. \eqref{innertrunc}. 
By Lemma~\ref{trunc}, it follows that $\|f_k\|_{\BMO}\le \|f\|_{\BMO}$.
Since the $f_k$ are bounded, the previous cases apply and the decreasing 
rearrangements, denoted unambiguously by $f_k^\ast$,  satisfy
Eq.~\eqref{eq:BMO-bound}. 
Thus
$\Omega(f_k^\ast, I)\le  c_*\|f\|_{\BMO}$
for every finite interval $I\subset (0,\mu(X))$, in particular for every interval of the form
$(0,t)$ for some real number $t < \mu(X)$.  Applying
Lemma~\ref{lem:mono} for such a $t$, we get that 
$f^\ast\in \Loneloc(0,\mu(X))$ and 
$$
\sup_{I\subset(0,\mu(X))}\Omega(f^\ast, I)\le \sup_{I\subset(0,\mu(X))}\sup_k \Omega(f_k^\ast, I)\le c_*\|f\|_{\BMO}.
$$
This proves Eq.~\eqref{eq:BMO-bound} for $f$.
\end{proof}


\section{Boundedness of the decreasing rearrangement on BMO}


In this section, we establish the boundedness of
the decreasing rearrangement on $\BMO(X)$ 
under suitable assumptions on the basis $\cA$. 
A sufficient condition is 
provided by the following decomposition.

\begin{definition} 
\label{criterion}
Let $(X,\cM, \mu)$ be a measure space,
$\cA$ a basis in $X$, $f$ a 
nonnegative measurable function
on $X$, and $c_\ast\geq{1}$. We say that $\cA$ admits a 
{\em $c_\ast$-Calder\'{o}n-Zygmund decomposition} 
for $f$ at a level $\gamma>0$ if there exist a 
pairwise-disjoint sequence $\{A_i\}\subset\cA$ and a corresponding 
sequence $\{\widetilde{A}_i\}\subset\cA$ such that
\begin{itemize}
\item[(i)] 
for all $i$, $\widetilde{A}_i\supset A_i$ and $\mu(\widetilde{A}_i)
\leq c_*\mu(A_i)$,
\item[(ii)] for all $i$, 
$f$ is integrable on $\widetilde A_i$ and
$\displaystyle{\fint_{\widetilde{A}_i}\!f \le \gamma
\le \fint_{A_i}\!f}$,
\end{itemize}
and 
\begin{itemize}
\item[(iii)] $f\leq \gamma$ almost 
everywhere on $X\setminus\bigcup \widetilde{A}_i$.
\end{itemize}
\end{definition} 

By this nomenclature, when $X$ is a cube in $\R^n$ with Lebesgue measure, the classical Calder\'{o}n-Zygmund lemma states that the basis of cubes in $X$ admits a $2^n$-Calder\'{o}n-Zygmund decomposition for any nonnegative integrable $f$ and $\gamma>f_{X}$. The multidimensional Riesz rising sun lemma \cite{kls} states that when $X$ is a rectangle in $\R^n$ with Lebesgue measure, the basis of rectangles in $X$ admits a $1$-Calder\'{o}n-Zygmund decomposition for any integrable $f$ and $\gamma>f_{X}$.

\subsection{Basic oscillation estimate}

The heart of the proof of the boundedness 
criterion, Theorem~\ref{thm:BMO-bound} below,
is the following basic estimate. Its proof
relies on an argument developed by Klemes \cite{kl}
for the decreasing rearrangement of a nonnegative function in one dimension,
with Definition~\ref{criterion} in place of the rising sun lemma.

\begin{lemma}
\label{lem:osc} 
Let $g\in L^\infty(X)$ be nonnegative, $\cA$ a basis in $X$, 
$0<t<\mu(X)$, and $c_\ast\ge 1$.
If $\cA$ admits a $c_\ast$-Calder\'{o}n-Zygmund 
decomposition $\{A_i,\widetilde{A}_i\}$  at 
level $\gamma=(g^\ast)_{(0,t)}$, then
\begin{equation}
\label{eq:osc}
\Omega(g^*,(0,t))\le c_* \sup_{i} \Omega(g, \widetilde{A}_i)\,.
\end{equation}
\end{lemma}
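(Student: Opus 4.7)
My plan is to follow the approach of Klemes \cite{kl}, with Definition~\ref{criterion} playing the role of the rising sun lemma. The starting point is the alternative formula \eqref{eq:osc-alt}: with $\gamma:=(g^*)_{(0,t)}$,
$$
t\cdot \Omega(g^*,(0,t)) = 2\int_0^t (g^*-\gamma)_+\,.
$$
Because $g^*$ is decreasing, its value at any $s\ge t$ is at most its mean on $(0,t)$, so $(g^*-\gamma)_+$ vanishes on $(t,\mu(X))$. Equimeasurability (applied to the nonnegative bounded function $(g-\gamma)_+$, whose decreasing rearrangement is $(g^*-\gamma)_+$) then gives
$$
\int_0^t (g^*-\gamma)_+ = \int_X (g-\gamma)_+\, d\mu\,.
$$

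Next, I use the Calder\'{o}n-Zygmund decomposition to control the right-hand side. Condition (iii) forces the integrand to vanish off $\bigcup_i\widetilde{A}_i$, so
$$
\int_X (g-\gamma)_+ \le \sum_i \int_{\widetilde{A}_i}(g-\gamma)_+\,.
$$
The upper bound in (ii), $g_{\widetilde{A}_i}\le \gamma$, gives the pointwise inequality $(g-\gamma)_+\le (g-g_{\widetilde{A}_i})_+$, and a second application of \eqref{eq:osc-alt} rewrites the right-hand side as $\tfrac12 \mu(\widetilde{A}_i)\Omega(g,\widetilde{A}_i)$. Factoring out the supremum and using (i) together with the disjointness of $\{A_i\}$, I obtain
$$
\int_X (g-\gamma)_+ \le \tfrac{1}{2}\,\sup_i \Omega(g,\widetilde{A}_i)\cdot \sum_i \mu(\widetilde{A}_i)\le \tfrac{c_*}{2}\,\sup_i \Omega(g,\widetilde{A}_i)\cdot \mu\!\Bigl(\bigcup_i A_i\Bigr)\,.
$$

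The main obstacle, and the point at which the lower bound in (ii) is used, is to show $\mu(\bigcup_i A_i)\le t$. If $\gamma=0$ then $g^\ast\equiv 0$ on $(0,t)$ and the conclusion is trivial, so assume $\gamma>0$. For any finite subfamily indexed by $F$, set $A_F:=\bigcup_{i\in F} A_i$ and combine (ii) with the Hardy-Littlewood inequality applied to $g\ge 0$ on $A_F$:
$$
\gamma\,\mu(A_F) \le \sum_{i\in F}\int_{A_i} g = \int_{A_F} g \le \int_0^{\mu(A_F)} g^*\,,
$$
so $(g^*)_{(0,\mu(A_F))}\ge \gamma = (g^*)_{(0,t)}$. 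Since $g^*$ is decreasing, the function $s\mapsto (g^*)_{(0,s)}$ is non-increasing, which forces $\mu(A_F)\le t$; taking the supremum over $F$ yields $\mu(\bigcup_i A_i)\le t$. Inserting this into the previous estimate and dividing by $t/2$ gives exactly \eqref{eq:osc}.
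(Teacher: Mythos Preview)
Your argument is essentially the paper's own proof (Klemes' method with the Calder\'{o}n--Zygmund decomposition replacing the rising sun lemma): express $\Omega(g^*,(0,t))$ via Eq.~\eqref{eq:osc-alt}, push the integral onto $X$ by equimeasurability and Condition~(iii), use Conditions~(i)--(ii) to bound by $c_*\,\mu(E)\sup_i\Omega(g,\widetilde A_i)$ with $E=\bigcup_i A_i$, and finally show $\mu(E)\le t$ via Hardy--Littlewood and the lower bound in~(ii).

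There is one genuine (if easily repaired) gap in your last step. From $(g^*)_{(0,\mu(A_F))}\ge\gamma=(g^*)_{(0,t)}$ and the mere fact that $s\mapsto(g^*)_{(0,s)}$ is non-increasing, you \emph{cannot} conclude $\mu(A_F)\le t$: the map may be constant on an interval strictly containing~$t$, which happens precisely when $g^*$ is constant on a neighbourhood of $(0,t)$. Your reduction to $\gamma>0$ does not exclude this, since $g^*$ could be a nonzero constant there. The paper avoids the issue by disposing of the case ``$g^*$ constant on $(0,t)$'' at the very start; once that trivial case is set aside one has $g^*(t)<\gamma$, hence the strict inequality $(g^*)_{(0,\tau)}<\gamma$ for every $\tau>t$, and the conclusion $\mu(A_F)\le t$ follows. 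Insert that one-line case distinction and your proof is complete.
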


\begin{proof} 
If $g^*$ is constant on $(0,t)$, there is nothing to show.
Otherwise, we separately consider the
numerator and denominator
in the definition 
of $\Omega(g^*,(0,t))$.

By assumption, there exist a sequence of
pairwise-disjoint subsets $\{A_i\}\subset\cA$
and another sequence $\{\widetilde{A}_i\}\subset\cA$
satisfying Conditions (i), (ii), and (iii). 
Set $E=\bigcup A_i$ and $\widetilde E=\bigcup \widetilde {A}_i$.

By Condition (iii), the set $\{x\in X: g(x)>\gamma\}$ is contained, up to a set of measure zero, in $\widetilde{E}$. From the equimeasurability of $g^*$ with $g$, it follows that 
$$
\int_{0}^{t}\!(g^\ast-\gamma)_+ 
=\int_{\gamma}^\infty \mu_g(\alpha)\, d\alpha
\le \int_{\widetilde{E}}\!(g-\gamma)_+ 
\le \sum_{i} \int_{\widetilde{A}_i}(g-\gamma)_+\,.
$$
Since, for each $i$, $\mu(\widetilde{A}_i)\le c_*\mu(A_i)$
by Condition (i) and $f_{\widetilde A_i}\le \gamma$ by Condition (ii), we have that
$$
\int_{0}^{t}\!(g^\ast-\gamma)_+ \le 
\sum_{i}\mu(\widetilde{A}_i) \fint_{\widetilde{A}_i}(g-g_{\widetilde{A}_i})_+
\le c_*\sum_{i} \mu(A_i) \fint_{\widetilde{A}_i}(g-g_{\widetilde{A}_i})_+
\,.
$$

Using that $\sum \mu(A_i)=\mu(E)$ and Eq.~\eqref{eq:osc-alt}, we conclude that
\begin{equation}\label{eq:key-numerator}
2\int_{0}^{t}\!(g^\ast-\gamma)_+ 
\le c_* \mu(E) \sup_i \Omega(g, \widetilde {A}_i)\,.
\end{equation}
It remains to show that $\mu(E)\le t$, which in combination with Eq.~\eqref{eq:key-numerator} and another application of Eq.~\eqref{eq:osc-alt}, yields the result of this lemma.

If we knew  that $\mu(E) < \infty$, we could write, by the Hardy-Littlewood inequality and Condition (ii),
$$
g^\ast_{(0,\mu(E))}\ge g_{E}\ge \gamma\,.
$$
Since $g^*_{(0,\tau)}<\gamma$ for all $\tau>t$, it would 
follow that $\mu(E)\le t$.  Otherwise, we just apply this argument
with $E$ replaced by the sets of finite measure
$E_n=\cup_{i=1}^{n}A_i$ for $n \in \N$, to get
$
\mu(E)=\lim\limits_{n\rightarrow\infty}\mu(E_n)\leq{t}$.
\end{proof}

We also need a result to replace Lemma~\ref{lem:osc} in situations
where Calder\'{o}n-Zygmund decompositions are only available locally.

\begin{lemma}
\label{lem:osc-approx}
Let $g\in L^\infty(X)$ be nonnegative, $\cA$ a basis in $X$, $0<t<\mu(X)$, $\gamma=(g^*)_{(0,t)}$, and $c_\ast\ge 1$.
If there is a sequence of nonnegative measurable functions with $g_k\uparrow g$ pointwise
such that $\cA$ admits a $c_*$-Calder\'{o}n-Zygmund
decomposition $\{A^k_i,\widetilde{A}^k_i\}$ for $g_k$
at level $\gamma$,  with the additional property that
\begin{itemize}
\item[(iv)] $g_k\equiv g$ on $\bigcup_i \widetilde A^k_i$,
\end{itemize}
then $\Omega(g^*, (0,t)) \le c_*\|g\|_{\BMO}$.
\end{lemma}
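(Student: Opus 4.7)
The plan is to mimic the proof of Lemma~\ref{lem:osc}, but applied to the approximants $g_k$ at the \emph{common} level $\gamma=(g^*)_{(0,t)}$ — crucially not $(g_k^*)_{(0,t)}$ — and then pass to the limit via monotone convergence. Since $g_k\uparrow g$ pointwise, continuity of measure from below gives $\mu_{g_k}\uparrow\mu_g$, hence $g_k^*\uparrow g^*$ and $(g_k^*)_{(0,t)}\uparrow\gamma$. In particular $g_k^*(t)\le(g_k^*)_{(0,t)}\le\gamma$, so equimeasurability yields
\begin{equation*}
\int_0^t(g_k^*-\gamma)_+\,ds=\int_\gamma^\infty\mu_{g_k}(\alpha)\,d\alpha=\int_X(g_k-\gamma)_+\,d\mu,
\end{equation*}
exactly as in Lemma~\ref{lem:osc} with $g_k$ in place of $g$.

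Next, I would use property~(iii) of the Calder\'{o}n-Zygmund decomposition of $g_k$ to restrict the right-hand integral to $\widetilde E_k:=\bigcup_i\widetilde A_i^k$ up to a null set, and then invoke the new ingredient, property~(iv), to replace $g_k$ by $g$ there:
\begin{equation*}
\int_X(g_k-\gamma)_+\le\int_{\widetilde E_k}(g_k-\gamma)_+=\int_{\widetilde E_k}(g-\gamma)_+\le\sum_i\int_{\widetilde A_i^k}(g-\gamma)_+.
\end{equation*}
On each $\widetilde A_i^k$, property~(iv) also gives $g_{\widetilde A_i^k}=(g_k)_{\widetilde A_i^k}\le\gamma$ by condition~(ii), so $(g-\gamma)_+\le(g-g_{\widetilde A_i^k})_+$ pointwise there. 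Combining this with conditions~(i) and~(ii), Eq.~\eqref{eq:osc-alt}, the bound $\Omega(g,\widetilde A_i^k)\le\|g\|_{\BMO}$, and the disjointness of the $A_i^k$, I recover the analogue of the numerator estimate from Lemma~\ref{lem:osc}, but with the oscillation of $g$ (not $g_k$) on the right:
\begin{equation*}
2\int_0^t(g_k^*-\gamma)_+\le c_*\mu(E_k)\,\|g\|_{\BMO},\qquad E_k:=\bigcup_i A_i^k.
\end{equation*}
The argument that $\mu(E_k)\le t$ is then identical to the one in Lemma~\ref{lem:osc}, using the Hardy-Littlewood inequality for $g_k$: whenever $\mu(E_k)<\infty$, condition~(ii) gives $(g_k^*)_{(0,\mu(E_k))}\ge(g_k)_{E_k}\ge\gamma$, while $(g_k^*)_{(0,t)}\le\gamma$, and the monotonicity of the running average of a decreasing function forces $\mu(E_k)\le t$; the infinite-measure case is handled by finite truncations $E_k^n=\bigcup_{i=1}^nA_i^k$ as before.

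Finally, pass to the limit $k\to\infty$. Since $(g_k^*-\gamma)_+\uparrow(g^*-\gamma)_+$ pointwise, monotone convergence gives $\int_0^t(g^*-\gamma)_+\le (c_*t/2)\|g\|_{\BMO}$. Because $\gamma=(g^*)_{(0,t)}$, one application of Eq.~\eqref{eq:osc-alt} yields $\Omega(g^*,(0,t))\le c_*\|g\|_{\BMO}$, as desired. The only real subtlety is that Lemma~\ref{lem:osc} cannot be invoked verbatim for $g_k$, because the decomposition there is required at the level $(g^*)_{(0,t)}$ for the \emph{same} function $g$; here we are given a decomposition of $g_k$ at the level of $g$, and so the proof of Lemma~\ref{lem:osc} must be re-run by hand, with property~(iv) used precisely to swap $g_k$ back to $g$ inside the selected region $\widetilde E_k$.
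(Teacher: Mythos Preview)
Your proof is correct, and in fact more careful than the paper's own argument. The paper simply cites Lemma~\ref{lem:osc} for each $g_k$ to obtain $\Omega(g_k^*,(0,t))\le c_*\sup_i\Omega(g_k,\widetilde A_i^k)$, then uses property~(iv) to identify $\Omega(g_k,\widetilde A_i^k)=\Omega(g,\widetilde A_i^k)\le\|g\|_{\BMO}$, and finally invokes Lemma~\ref{lem:mono} to pass to the limit. Your observation that this citation is not literally justified is exactly right: Lemma~\ref{lem:osc} requires the decomposition to sit at level $(g_k^*)_{(0,t)}$, whereas here it sits at $\gamma=(g^*)_{(0,t)}\ge(g_k^*)_{(0,t)}$, and re-running that proof at the larger level only bounds $\tfrac{2}{t}\int_0^t(g_k^*-\gamma)_+$, not $\Omega(g_k^*,(0,t))$. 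Your re-derivation resolves this cleanly by tracking $\int_0^t(g_k^*-\gamma)_+$ throughout and letting monotone convergence turn $\gamma$ into the correct mean $(g^*)_{(0,t)}$ only at the very end; the paper's use of Lemma~\ref{lem:mono} is replaced by this direct limit. One small point worth making explicit in your denominator step: monotonicity of the running average alone gives $\mu(E_k)\le t$ only when $(g_k^*)_{(0,t)}<\gamma$; in the borderline case $(g_k^*)_{(0,t)}=\gamma$ one has $g_k^*=g^*$ a.e.\ on $(0,t)$, and then either $g^*$ is constant there (trivial) or the strict-inequality argument of Lemma~\ref{lem:osc} applies verbatim.
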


\begin{proof} 
From Lemma~\ref{lem:osc}, applied to each $g_k$ with decomposition $\{A^k_i,\widetilde{A}^k_i\}$, and the fact that $g\equiv g_k$ on each $\widetilde{A}^k_i$, we get
$$\Omega(g_k^*,(0,t))\le c_* \sup_{i} \Omega(g_k, \widetilde{A}^k_i)\le c_*\|g\|_{\BMO}.$$  
Since the $g_k$ are nonnegative and bounded, the hypotheses of Lemma~\ref{lem:mono} apply, so
$$\Omega(g^\ast,(0,t)) 
\le c_*\|g\|_{\BMO}.$$ 
\end{proof}

\subsection{General boundedness criterion}

Lemmas~\ref{lem:BMO-bound} and \ref{lem:osc} now combine to give us the following result.

\begin{theorem}
\label{thm:BMO-bound}
Let $X$ be a semi-finite measure space, $\cA$ a basis in $X$, 
and \mbox{$c_*\ge 1$}. Assume that 
for every nonnegative $g\in L^\infty(X)$
and each $0<t<\mu(X)$, the basis $\cA$ admits a 
$c_\ast$-Calder\'{o}n-Zygmund decomposition at level 
$\gamma=(g^*)_{(0,t)}$.

If $f\in\BMO(X)$ is rearrangeable, then $f^\ast$ 
is locally integrable and 
\begin{equation}
\label{eq:BMO-bound2}
\|f^\ast\|_{\BMO}\le c_*\|f\|_{\BMO}\,.
\end{equation}
\end{theorem}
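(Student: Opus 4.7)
The plan is simply to chain together Lemma~\ref{lem:osc} and Lemma~\ref{lem:BMO-bound}; the two preceding lemmas have been set up precisely so that this final step becomes essentially automatic.

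First I would verify the hypothesis Eq.~\eqref{eq:bounded-proof-5} of Lemma~\ref{lem:BMO-bound}. Fix any nonnegative $g\in L^\infty(X)$ and any $0<t<\mu(X)$, and set $\gamma=(g^*)_{(0,t)}$. By the standing assumption of the theorem, $\cA$ admits a $c_*$-Calder\'{o}n-Zygmund decomposition $\{A_i,\widetilde A_i\}$ for $g$ at level $\gamma$. Lemma~\ref{lem:osc} then gives
$$
\Omega(g^*,(0,t))\le c_*\sup_i\Omega(g,\widetilde A_i)\le c_*\|g\|_{\BMO}\,,
$$
which is exactly Eq.~\eqref{eq:bounded-proof-5} with the same constant $c_*$.

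Second, I would invoke Lemma~\ref{lem:BMO-bound}, whose hypothesis has just been checked, and apply it to the given rearrangeable $f\in\BMO(X)$. The conclusion of that lemma is precisely the local integrability of $f^*$ on $(0,\mu(X))$ together with the norm bound Eq.~\eqref{eq:BMO-bound2}, which is what we wanted.

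Since both lemmas are already in place, I do not expect any genuine obstacle in this last step. The only subtlety worth flagging is that it is essential that the Calder\'{o}n-Zygmund hypothesis is assumed uniformly over \emph{all} nonnegative $g\in L^\infty(X)$ and \emph{all} cutoffs $t$: Case 3 of Lemma~\ref{lem:BMO-bound} applies Eq.~\eqref{eq:bounded-proof-5} to the whole monotone sequence of bounded truncations $\phi_k\circ f$ rather than to $f$ directly, so the bound must be at our disposal uniformly along that approximating sequence in order for the monotone convergence argument of Lemma~\ref{lem:mono} to close the proof.
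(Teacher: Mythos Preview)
Your proposal is correct and matches the paper's approach exactly: the paper presents Theorem~\ref{thm:BMO-bound} as an immediate combination of Lemmas~\ref{lem:BMO-bound} and~\ref{lem:osc}, which is precisely the two-step chaining you describe.
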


\begin{remark}\label{relaxation}
By replacing Lemma~\ref{lem:osc} with Lemma~\ref{lem:osc-approx},
the conclusion of
Theorem~\ref{thm:BMO-bound} holds under the
weaker hypothesis that every nonnegative $g\in L^\infty(X)$ satisfies the assumption of Lemma~\ref{lem:osc-approx}.
\end{remark}

\section{Application to metric measure spaces}\label{sec:mms}

Let $(X,\rho)$ be a metric space equipped with 
a nontrivial Borel regular measure~$\mu$. A {\em closed ball} in $X$ is a subset of the form
$$
B(x,r)=\left\{y\in X: \rho(x,y)\leq r\right\}\,
$$
for some prescribed {\em radius} $r>0$ and {\em centre} $x\in X$. 
We also make the assumption that 
$$0<\mu(B(x,r))<\infty \quad \mbox{ for all }
x\in{X}, \; r>0\,.$$
It follows that the measure is 
$\sigma$-finite: write  $X = \cup_{k \ge 1}B(x_0,k)$ for some 
$x_0\in X$. Note that the metric space is not assumed to be complete; in particular, domains in Euclidean space are examples of metric measure spaces. 

The collection of all balls
$\cB=\{B(x,r)\subset X: x\in X, r>0\}$ forms a basis in $X$, and we define the space $\BMO(X)$ with respect to $\cB$ as in Definition~\ref{def:BMO}.

\subsection{Doubling spaces}

We say that a measure is {\em doubling} if the measure of any ball
controls, up to a multiplicative 
constant, the measure
of the co-centred ball of twice the radius. Equivalently, there are constants $c_\lambda\ge 1$ such that
$$
\mu(B(x,\lambda r))\le c_\lambda \mu(B(x,r))
$$
for all $x\in X$, $r>0$, and $\lambda\geq 1$. The growth of $c_\lambda$ as $\lambda\to\infty$
provides a rough bound on the dimension of the space. Note that a doubling metric measure space is an example of a space of homogeneous type in the sense of Coifman-Weiss~\cite[Chapitre III]{cw}. 

Over the last few decades, doubling measures have 
received considerable attention in geometric analysis, in connection with
Monge-Amp\`ere equations~\cite{CG}, with properties of harmonic measure
on the boundary of domains~\cite{KT}, and with the theory of Sobolev spaces~\cite{hk}. There are many results in the literature about the existence of doubling measures (see, for example, ~\cite{ls}). In general, however, if a doubling measure $\mu$ is restricted to a subset of $A\subset X$, it may no longer be doubling. 

In any metric space, the basic covering theorem~\cite[Theorem 1.2]{hei}
implies that for every family $\cF$ of 
balls in $X$ of uniformly bounded radii, 
there exists a pairwise-disjoint subfamily $\cG$ in
$\cF$ such that
\begin{equation}\label{eq:vitali}
\bigcup_{B(x,r)\in\cF}B(x,r)\ \subset\ \bigcup_{B(x,r)\in\cG}B(x, 5r).
\end{equation}
In fact, the constant $5$ can be replaced
by any $\lambda>3$~\cite{bb}. 

A doubling metric measure space 
is also geometrically doubling, in the sense that any ball can be covered
by a fixed finite number of balls of half the radius \cite{cw}, and so any disjoint collection of balls is necessarily countable \cite{hy}. Therefore, the subfamily $\cG$ coming from the basic covering theorem is countable when $\mu$ is doubling.

The Lebesgue differentiation theorem is well known to hold in the setting of doubling metric measure spaces \cite[Theorem 1.8]{hei}: 
for every locally integrable function $f$ on $X$,
$$
\lim_{r\to 0^+} f_{B(x,r)} = f(x)
$$
holds for almost every $x\in X$.

\subsection{Proof of the main result}

\begin{proof}[Proof of Theorem~\ref{thm:bounded-doubling}]
We verify that $X$ satisfies the relaxed assumptions 
of Theorem~\ref{thm:BMO-bound} mentioned in Remark~\ref{relaxation} with the basis $\cB$ of all balls and $c_*=c_5$.

Fix $0<t<\mu(X)$. Let $g\in L^\infty(X)$ be nonnegative and set $\gamma=(g^*)_{(0,t)}$. We construct a nonnegative monotone 
sequence $g_k\uparrow g$ and
a $c_*$-Calder\'{o}n-Zygmund decomposition
for each $g_k$ at level $\gamma$ satisfying (iv) of Remark~\ref{relaxation}. 

If $\mu(E_\gamma(g))=0$, there is nothing to show.
Otherwise, set
$$
r(x):= \inf \bigl\{r>0: g_{B(x,5r)}\le \gamma\bigr\}\,,\qquad x\in X\,.
$$
Since $\mu(B(x,r)) \rightarrow \mu(X)$ as $r \rightarrow \infty$, for $r$ sufficiently large we have $\mu(B(x,r)) \ge t$.  For such $r$, the monotonicity of $g^*$ and the Hardy-Littlewood inequality imply that
$$
\gamma \ge (g^*)_{(0,\mu(B(x,r)))} \ge g_{B(x,r)}\,,
$$
showing that $r(x)<\infty$.  Moreover, if $r(x) > 0$, 
\begin{equation}
\label{eq:5}
g_{B(x,5r(x))}\le \gamma < g_{B(x,r(x))}\,,
\end{equation}
where the first inequality holds since the map
$r\mapsto\mu(B(x,r))$ is right-continuous for any $x\in X$, and
the second holds by the definition of $r(x)$.  

By the Lebesgue differentiation theorem, $r(x)>0$ for almost
every $x\in E_\gamma(g)$,  so the collection
$$\cF:=\{B(x,r(x)): x\in E_{\gamma}(g),r(x) > 0\}$$ 
covers $E_\gamma(g)$ up to a set of measure zero, and $g \leq \gamma$ almost everywhere on 
$$S : = X \setminus \bigcup_{\cF} B(x,5r(x))\, .$$
As $X$ may have infinite diameter, there is no guarantee that
the radii of the balls in the collection $\cF$ are uniformly bounded.
For $k \in \N$, consider the subcollection
$\cF_k$ consisting of those balls in $\cF$ whose radii are bounded above by $k$,  and let
$$
X_k:= \bigcup_{\cF_k } B(x,5r(x)) \cup S\,,\qquad g_k = g\mathcal{X}_{X_k}\,,
$$
so that $g_k\uparrow g$.  By the basic covering theorem,
there exists a countable pairwise-disjoint subfamily $\{B(x_i, r(x_i))\}$ 
of $\cF_k$ satisfying Eq.~\eqref{eq:vitali}. Set $B_i:= B(x_i,r(x_i))$, $\widetilde B_i:=B(x_i,5r(x_i))$.  Then $g_k = g$ on $\cup \widetilde B_i$ and the level set
$E_\gamma(g_k)$ is contained, up to a set of measure zero, in $\cup \widetilde B_i$.  Combining this with Eq.~\eqref{eq:5}, we obtain a $c_*$-Calder\'{o}n-Zygmund decomposition for  $g_k$ at level $\gamma$, with $c_*=c_5$.
By Lemma~\ref{lem:osc-approx} and Remark~\ref{relaxation}, this completes the proof. 

Note that if we replace $5$ by any $\lambda > 3$ in the basic covering  theorem, we get Eq.~\eqref{eq:BMO-bounded} with $c_*= c_\lambda$, and taking the infimum over all such $\lambda$ gives the same conclusion with $c_*$ as in Eq.~\eqref{eq:cstar}.
\end{proof}

\bibliographystyle{amsplain}

\end{document}